\documentclass[11pt]{amsart}

\usepackage{a4wide}
\usepackage{amssymb}
\usepackage{mathrsfs}
\usepackage{enumerate}
\usepackage{esint}
\usepackage{titletoc}
\usepackage[colorlinks=true, urlcolor=blue, linkcolor=blue, citecolor=black]{hyperref}
\usepackage[nameinlink]{cleveref}

\theoremstyle{plain}
\newtheorem{theorem}{Theorem}[section]
\newtheorem{lemma}[theorem]{Lemma}
\newtheorem{corollary}[theorem]{Corollary}
\newtheorem{proposition}[theorem]{Proposition}

\theoremstyle{definition}
\newtheorem{definition}[theorem]{Definition}

\numberwithin{equation}{section}

\makeatletter
\@namedef{subjclassname@2020}{\textup{2020} Mathematics Subject Classification}
\makeatother


\title[The fractional $p$-Laplacian on hyperbolic spaces]{The fractional $p$-Laplacian on hyperbolic spaces}

\author{Jongmyeong Kim}
\address{Institute of Mathematics, Academia Sinica, Taipei 106319, Taiwan}
\email{jmkim@gate.sinica.edu.tw}

\author{Minhyun Kim}
\address{Department of Mathematics \& Research Institute for Natural Sciences, Hanyang University, 04763 Seoul, Republic of Korea}
\email{minhyun@hanyang.ac.kr}

\author{Ki-Ahm Lee}
\address{Department of Mathematical Sciences \& Research Institute of Mathematics, Seoul National University, 08826 Seoul, Republic of Korea}
\email{kiahm@snu.ac.kr}

\subjclass[2020]{35J92, 35R11}
\keywords{fractional $p$-Laplacian, hyperbolic space, pointwise convergence}
\thanks{The research of Jongmyeong Kim is supported by the National Research Foundation of Korea (NRF) grant funded by the Korea government (MSIP): NRF-2016K2A9A2A13003815. Minhyun Kim gratefully acknowledges financial support by the German Research Foundation (GRK 2235 - 282638148) and the National Research Foundation of Korea (RS-2023-00252297). The research of Ki-Ahm Lee is supported by the Ministry of Education of the Republic of Korea and the National Research Foundation of Korea (RS-2025-00515707).
}


\begin{document}

\begin{abstract}
We present three equivalent definitions of the fractional $p$-Laplacian $(-\Delta_{\mathbb{H}^{n}})^{s}_{p}$, $0<s<1$, $p>1$, with normalizing constants, on hyperbolic spaces. The explicit values of the constants enable us to study the convergence of the fractional $p$-Laplacian to the $p$-Laplacian as $s \to 1^{-}$.
\end{abstract}

\maketitle


\section{Introduction} \label{sec:introduction}


Operators of fractional-order have been extensively studied not only in Euclidean spaces \cite{DNPV12}, but also on Riemannian manifolds \cite{AOCM18,BGS15,CG11,DNS19,Gon18,GZ03,GZZ18}, metric measure spaces \cite{CKK+22,CK08,FLW14,Gri03,GHL14}, discrete models \cite{CRS+18}, Lie groups \cite{CS17,CRT01,FF15,FMP+18}, and Wiener spaces \cite{CS17}. In Euclidean spaces, several equivalent definitions of the fractional Laplacian exist \cite{Kwa17} due to the simple structure of the spaces. In contrast to the case of Euclidean spaces, not all definitions are equivalent in more general spaces. For instance, one can study a regional type operator \cite{GZZ18} or a spectral type operator \cite{ST10} on Riemannian manifolds. Moreover, some definitions, such as those relying on the Fourier transform, do not even work on general Riemannian manifolds and metric measure spaces. Nonetheless, for specific Riemannian manifolds such as hyperbolic spaces and spheres, several representations of the fractional Laplacian have been established \cite{BGS15,DNS19} by means of the rich structure of these spaces.

The aim of this paper is two-fold. First, we extend representation formulas in \cite{BGS15} to the nonlinear regime on hyperbolic spaces. Specifically, we define the fractional $p$-Laplacian $(-\Delta_{\mathbb{H}^{n}})^{s}_{p}$ for $n \in \mathbb{N}$, $0 < s < 1$, and $p > 1$ and provide two additional equivalent definitions via the heat semigroup and the Caffarelli--Silvestre extension. Note that a definition based on the Fourier transform is not available due to the nonlinearity of the operator. Second, we investigate the pointwise convergence of $(-\Delta_{\mathbb{H}^{n}})^{s}_{p}u(x)$ as $s \to 1^{-}$. For this purpose, the values of the normalizing constants in these definitions are provided explicitly.

Let us define the fractional $p$-Laplacian on hyperbolic spaces as the pointwise integral representation with singular kernels. Note that hyperbolic geometry is distinguished from Euclidean geometry only when $n \geq 2$.

\begin{definition} \label{def:kernel}
Let $n \geq 2$, $0<s<1$, and $p > 1$. The \emph{fractional $p$-Laplacian} on $\mathbb{H}^{n}$ is defined by
\begin{equation} \label{eq:pw-representation}
(-\Delta_{\mathbb{H}^{n}})^{s}_{p} u(x) = c_{n, s, p} \, \mathrm{P.V.} \int_{\mathbb{H}^{n}} |u(x)-u(\xi)|^{p-2} (u(x)-u(\xi)) \mathcal{K}_{n,s,p}(d(x,\xi)) \,\mathrm{d}\xi
\end{equation}
with the kernel $\mathcal{K}_{n,s,p}$ given by
\begin{equation*}
\mathcal{K}_{n,s,p}(\rho) = C_{1} \left( \frac{-\partial_\rho}{\sinh \rho} \right)^{\frac{n-1}{2}} \left( \rho^{-\frac{1+sp}{2}} K_{\frac{1+sp}{2}} \left(\frac{n-1}{2}\rho \right) \right)
\end{equation*}
when $n \geq 3$ is odd and
\begin{equation*}
\mathcal{K}_{n,s,p}(\rho) = C_{1} \int_{\rho}^{\infty} \frac{\sinh r}{\sqrt{\pi}\sqrt{\cosh r - \cosh \rho}} \left( \frac{-\partial_r}{\sinh r} \right)^{\frac{n}{2}} \left( r^{-\frac{1+sp}{2}} K_{\frac{1+sp}{2}}\left( \frac{n-1}{2}r \right) \right) \mathrm{d}r
\end{equation*}
when $n \geq 2$ is even, where
\begin{equation}\label{eq-cnsp}
c_{n, s, p} = \frac{p}{2} \frac{\sqrt{\pi}/2}{\Gamma(\frac{p+1}{2})} \frac{2^{2s} \Gamma(\frac{n+sp}{2})}{\pi^{\frac{n}{2}} |\Gamma(-s)|}, \quad C_{1} = \frac{1}{2^{\frac{n-2+sp}{2}} \Gamma(\frac{n+sp}{2})} \left( \frac{n-1}{2} \right)^{\frac{1+sp}{2}},
\end{equation}
and $K_{\nu}$ is the modified Bessel function of the second kind.
\end{definition}

For the linear case $p=2$, the pointwise integral representation with singular kernel is provided in \cite[Theorem 2.4 and 2.5]{BGS15} without constants. The main tool in \cite{BGS15} is the Fourier transform on hyperbolic spaces, but it is not available in the nonlinear setting. Our definition is motivated by the nonlinear extension of Bochner's definition \cite{Boc49}, which will be given in \Cref{thm-Bochner} below.

The normalizing constant $C_{1}$ in \Cref{def:kernel} is carefully chosen so that the pointwise convergence $\lim_{s \nearrow 1} (-\Delta_{\mathbb{H}^{n}})^{s}_{p} u(x) = (-\Delta_{\mathbb{H}^{n}})_{p} u(x)$ holds (see \Cref{thm:convergence}). We emphasize that the normalizing constant plays a crucial role in some contexts. For instance, it is used in the robust regularity theory (see \cite{CS09,KKL21}).

The singular kernel $\rho^{-n-sp}$ of the fractional $p$-Laplacian on Euclidean space $\mathbb{R}^{n}$ is homogeneous of degree $-n-sp$. This is a natural consequence of the invariance of the scale of the operator. However, such homogeneity cannot be expected in the hyperbolic setting because hyperbolic geometry comes into play. In fact, \Cref{prop-asymptotic} below exhibits the behavior of the kernel $\mathcal{K}_{n, s, p}$.

Here and in what follows, we write $A(\rho) \sim B(\rho)$ as $\rho\to 0^+$ (resp.\ $\rho=\infty$) to mean that there exist constants $c, C>0$ and $\rho_0>0$ such that $cB(\rho) \leq A(\rho) \leq CB(\rho)$ for all $\rho \in (0, \rho_0)$ (resp.\ $\rho \in (\rho_0, \infty)$).

\begin{proposition}\label{prop-asymptotic}
There exist constants $c, C>0$ such that
\begin{equation*}
c\rho^{-\frac{1+sp}{2}} (\sinh\rho)^{-\frac{n-1}{2}} K_{\frac{n+sp}{2}}\left(\frac{n-1}{2}\rho\right) \leq \mathcal{K}_{n,s,p}(\rho) \leq C\rho^{-\frac{1+sp}{2}} (\sinh\rho)^{-\frac{n-1}{2}} K_{\frac{n+sp}{2}}\left(\frac{n-1}{2}\rho\right)
\end{equation*}
for all $\rho > 0$. In particular,
\begin{equation*}
\mathcal{K}_{n,s,p}(\rho) \sim \rho^{-n-sp}
\end{equation*}
as $\rho \to 0^+$ and
\begin{equation*}
\mathcal{K}_{n,s,p}(\rho) \sim \rho^{-1-\frac{sp}{2}}e^{-(n-1)\rho}
\end{equation*}
as $\rho \to \infty$.
\end{proposition}

We remark that the well-definedness of $(-\Delta_{\mathbb{H}^n})_p^su(x)$ for $u \in C^2_b(\mathbb{H}^n)$ and $x \in \mathbb{H}^n$ (assuming also that $\nabla u(x) \neq 0$ if $p \in (1, \frac{2}{2-s}]$), where $C^{2}_{b}(\mathbb{H}^{n})$ denotes the space of bounded $C^{2}$-functions on $\mathbb{H}^{n}$, can be checked by using \Cref{prop-asymptotic}. Indeed, the proof of \cite[Lemma~3.6]{KKL19} works on hyperbolic spaces in the exact same way, once we choose sufficiently small $\varepsilon>0$ so that $\mathcal{K}_{n, s, p}(d(x, \xi)) \sim d(x, \xi)^{-n-sp}$ in the ball $B_\varepsilon(x)=\{\xi \in \mathbb{H}^n: d(x, \xi) < \varepsilon\}$ with the help of \Cref{prop-asymptotic}. This proves the finiteness of the integral in \eqref{eq:pw-representation} over $B_\varepsilon(x)$. On the other hand, by using \Cref{prop-asymptotic} again, we find a large $R>0$ so that $\mathcal{K}_{n, s, p}(d(x, \xi)) \sim d(x, \xi)^{-1-\frac{sp}{2}}e^{-(n-1)d(x, \xi)}$ outside $B_R(x)$. This proves that the integral in \eqref{eq:pw-representation} over $\mathbb{H}^n \setminus B_R(x)$ is estimated by
\begin{equation*}
\max\{2^{p-2}, 1\} \|u\|_{L^\infty(\mathbb{H}^n)}^{p-1} \int_{\mathbb{S}^{n-1}} \int_R^\infty \rho^{-1-\frac{sp}{2}}e^{-(n-1)\rho} \sinh^{n-1} \rho \,\mathrm{d}\rho \,\mathrm{d}\omega,
\end{equation*}
which is finite. The finiteness of the remaining integral over $B_R(x) \setminus B_\varepsilon(x)$ is obvious since the integrand is bounded in a bounded region.

As the first main result of this paper, we provide an equivalent representation of the fractional $p$-Laplacian on hyperbolic spaces via the heat semigroup. To this end, let $\lbrace e^{t\Delta_{\mathbb{H}^{n}}} \rbrace_{t \geq 0}$ denote the heat semigroup generated by the Laplacian $\Delta_{\mathbb{H}^{n}}$ on $\mathbb{H}^n$. That is, for a given function $f: \mathbb{H}^n \to \mathbb{R}$ we denote by $e^{t\Delta_{\mathbb{H}^{n}}}[f](x)$ the solution $w(x,t)$ of the Cauchy problem
\begin{equation} \label{eq:Cauchy}
\begin{cases}
\partial_{t} w(x,t) - \Delta_{\mathbb{H}^{n}} w(x,t) = 0, &x \in \mathbb{H}^{n}, t > 0, \\
w(x,0) = f(x), &x \in \mathbb{H}^{n}.
\end{cases}
\end{equation}

\begin{theorem}\label{thm-Bochner}
Let $n \in \mathbb{N}$, $0<s<1$, and $p > 1$. Let $u \in C^{2}_{b}(\mathbb{H}^{n})$ and $x \in \mathbb{H}^{n}$. If $p \in (1,\frac{2}{2-s}]$, assume in addition that $\nabla u(x) \neq 0$. The {\it fractional $p$-Laplacian on $\mathbb{H}^n$} is defined by
\begin{equation}\label{eq-Bochner}
(-\Delta_{\mathbb{H}^{n}})^{s}_{p} u(x) = C_{2} \int_{0}^{\infty} e^{t\Delta_{\mathbb{H}^{n}}}[ \Phi_{p}(u(x)-u(\cdot)) ](x) \frac{\mathrm{d}t}{t^{1+\frac{sp}{2}}},
\end{equation}
where
\begin{equation*}
C_{2} = \frac{p}{2} \frac{\sqrt{\pi}/2}{\Gamma(\frac{p+1}{2})} \frac{2^{s(2-p)}}{|\Gamma(-s)|}
\end{equation*}
and $\Phi_{p}(r) = |r|^{p-2}r$.
\end{theorem}

Note that the formula \eqref{eq-Bochner} on manifolds including hyperbolic spaces is given in \cite[Section~3.1]{BGS15} for the linear case $p=2$, and that it was proposed as the definition of the fractional $p$-Laplacian on Riemannian manifolds in \cite[Section~8.2]{dTGCV21}.

We now turn to another representation of the fractional $p$-Laplacian on $\mathbb{H}^{n}$. We recall that the fractional Laplacian on $\mathbb{R}^{n}$ can be realized as a Dirichlet-to-Neumann map via the Caffarelli--Silvestre extension \cite{CS07}. Later, the article \cite{ST10} relates the heat semigroup to this extension. Moreover, this relation is extended to the nonlinear framework \cite{dTGCV21} in $\mathbb{R}^{n}$. In this paper, we further investigate this relation on hyperbolic spaces. Let us consider the extension problem
\begin{equation} \label{eq:ext-prob}
\begin{cases}
\displaystyle\Delta_{x} U(x,y) + \frac{1-sp}{y} U_{y}(x,y) + U_{yy}(x,y) = 0, &x \in \mathbb{H}^{n}, y > 0, \\
U(x,0) = f(x), &x \in \mathbb{H}^{n}.
\end{cases}
\end{equation}
We will show in \Cref{lem:Poisson} that the function $U$ defined by
\begin{equation*}
U(x,y) = \int_{\mathbb{H}^{n}} P(d(x,\xi), y) f(\xi) \,\mathrm{d}\xi,
\end{equation*}
where $P$ is the Poisson kernel given in the same lemma, is a solution of \eqref{eq:ext-prob}. We define an extension operator $E_{s, p}$ by $E_{s, p}[f] := U$. The following theorem is our next main result.

\begin{theorem} \label{thm:extension}
Let $n \in \mathbb{N}$, $0<s<1$, and $p > 1$. Let $u \in C^{2}_{b}(\mathbb{H}^{n})$ and $x \in \mathbb{H}^{n}$. If $p \in (1,\frac{2}{2-s}]$, assume $\nabla u(x) \neq 0$ additionally. Then
\begin{equation*}
\begin{split}
(-\Delta_{\mathbb{H}^{n}})^{s}_{p} u(x)
&= C_{3} \lim_{y \searrow 0} \frac{E_{s, p}[\Phi_{p}(u(x)-u(\cdot))](x, y)}{y^{sp}} \\
&= \frac{C_{3}}{sp} \lim_{y \searrow 0} y^{1-sp} \partial_{y} \Big( E_{s, p}[\Phi_{p}(u(x)-u(\cdot))] \Big) (x, y),
\end{split}
\end{equation*}
where
\begin{equation*}
C_{3} = \frac{p}{2} \frac{\sqrt{\pi}/2}{\Gamma(\frac{p+1}{2})} \frac{2^{2s} \Gamma(\frac{sp}{2})}{|\Gamma(-s)|}.
\end{equation*}
\end{theorem}

The last result is the pointwise convergence of the fractional $p$-Laplacian on $\mathbb{H}^{n}$ as $s \to 1^{-}$. As one can expect, the fractional $p$-Laplacian converges to the $p$-Laplacian as a limit. Recall that the $p$-Laplacian on $\mathbb{H}^{n}$ is defined by $(-\Delta_{\mathbb{H}^{n}})_{p} u(x) = -\mathrm{div}(|\nabla u(x)|^{p-2} \nabla u(x))$.

\begin{theorem} \label{thm:convergence}
Let $n \in \mathbb{N}$, $p \geq 2$, and $u \in C^{2}_{b}(\mathbb{H}^{n})$. For $x \in \mathbb{H}^{n}$ such that $\nabla u(x) \neq 0$,
\begin{equation*}
\lim_{s \nearrow 1} (-\Delta_{\mathbb{H}^{n}})^{s}_{p} u(x) = (-\Delta_{\mathbb{H}^{n}})_{p} u(x).
\end{equation*}
\end{theorem}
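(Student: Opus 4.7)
The plan is to combine the singular integral representation from \Cref{thm:kernel} with a localization argument in normal coordinates at $x$, and then to appeal to the known Euclidean convergence of the fractional $p$-Laplacian established in \cite{dTGCV21}. The starting observation is that $c_{n,s,p}$ carries the factor $1/|\Gamma(-s)|$, and since $\Gamma(-s) = -\Gamma(1-s)/s$ and $\Gamma(1-s) \sim 1/(1-s)$ as $s \nearrow 1$, the prefactor $c_{n,s,p}$ behaves like a bounded nonzero constant times $(1-s)$. This is precisely the scaling responsible for the local-limit phenomenon on $\mathbb{R}^n$.

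First I would fix a small $R \in (0, 1)$ and split \eqref{eq:pw-representation} into the inner part over the geodesic ball $B_R^{\mathbb{H}^n}(x)$ and the outer part over its complement. For the outer part, the stated exponential decay $\mathcal{K}_{n,s,p}(\rho) \lesssim \rho^{-1-sp/2} e^{-(n-1)\rho}$ for $\rho \geq R$ together with the polar volume element $\sinh^{n-1}\rho \lesssim e^{(n-1)\rho}$ produces an integrand of order $\rho^{-1-sp/2}$, which is integrable at infinity uniformly in $s \in [1/2, 1)$. Since $u$ is bounded, the outer contribution is $O(c_{n,s,p}) = O(1-s)$ and hence vanishes in the limit.

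For the inner integral I would pass to normal coordinates $\xi = \exp_x(v)$, $v \in T_x \mathbb{H}^n \cong \mathbb{R}^n$, in which $d(x,\xi) = |v|$ and $\mathrm{d}\xi = (\sinh|v|/|v|)^{n-1} \mathrm{d}v = (1 + O(|v|^2))\,\mathrm{d}v$. Writing $\tilde u(v) = u(\exp_x(v))$, I would use the small-$\rho$ expansion of $\mathcal{K}_{n,s,p}$ derived from the Bessel-function formula to identify the leading behaviour
\[
\mathcal{K}_{n,s,p}(\rho) = C_\ast(s)\, \rho^{-n-sp}\bigl(1 + O(\rho^2)\bigr)
\]
with $C_\ast(s)$ explicit, and to verify that $c_{n,s,p}\, C_\ast(s)$ equals, up to $O(1-s)^2$, the Euclidean normalizing constant for $(-\Delta_{\mathbb{R}^n})^s_p$. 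The inner contribution then matches the Euclidean singular-integral representation of $(-\Delta_{\mathbb{R}^n})^s_p \tilde u(0)$ up to a remainder controlled by $c_{n,s,p} \int_{B_R} |v|^{2-n-sp} \mathrm{d}v = O(1-s)$, where the extra $|v|^2$ factor comes from the Jacobian and kernel corrections and is integrable against the $\rho^{-n-sp}$ singularity.

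The conclusion then follows from the Euclidean convergence $\lim_{s \nearrow 1}(-\Delta_{\mathbb{R}^n})^s_p \tilde u(0) = (-\Delta_{\mathbb{R}^n})_p \tilde u(0)$ together with the identity $(-\Delta_{\mathbb{R}^n})_p \tilde u(0) = (-\Delta_{\mathbb{H}^n})_p u(x)$, which holds because in normal coordinates the metric is Euclidean to first order at the origin, so the gradient and Hessian of $\tilde u$ at $0$ agree with those of $u$ at $x$. The main obstacle is the quantitative small-$\rho$ analysis of $\mathcal{K}_{n,s,p}$ with explicit tracking of the $s$-dependence of $C_\ast(s)$, needed to confirm that $c_{n,s,p}\, C_\ast(s)$ reproduces the Euclidean normalizing constant in the limit; once this constant-matching is done, the remaining estimates are a standard dominated-convergence/Taylor-expansion package using $C^2_b$ regularity and the assumption $\nabla u(x) \neq 0$.
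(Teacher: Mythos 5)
Your route is genuinely different from the paper's. The paper works entirely in hyperbolic polar coordinates: it uses the geodesic reflection $\mathcal{T}_x$ and Taylor's theorem to reduce the statement to the three limits in \eqref{eq:integrals} with kernel $\mathcal{K}_{n,s,p}$ and volume element $\sinh^{n-1}\rho$, and then computes those limits directly (\Cref{lem:infty}, \Cref{lem:zero}, \Cref{lem:zero-beta}) by induction on the dimension parameter using the explicit Bessel structure, with \Cref{lem:from3to2} handling the $\sqrt{\cosh r-\cosh\rho}$ factor in even dimensions. You instead pull back to normal coordinates, compare $\mathcal{K}_{n,s,p}$ with the Euclidean kernel $\rho^{-n-sp}$, and outsource the local limit to the known Euclidean convergence result. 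This is conceptually cleaner and more modular (it explains \emph{why} the hyperbolic and Euclidean constants agree), but it trades the paper's explicit moment computations for quantitative heat-kernel asymptotics. One small remark on the constant matching: since the hyperbolic heat kernel agrees with the Euclidean Gaussian to leading order as $t\to 0$, the value of $C_\ast(s)$ forced by the paper's choice of $C_2$ is exactly $1$, so $c_{n,s,p}C_\ast(s)$ equals the Euclidean normalizing constant \emph{exactly}, not merely up to $O((1-s)^2)$; this makes the argument somewhat cleaner than you anticipate.

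There are two concrete issues to repair. First, the remainder bound you quote, $c_{n,s,p}\int_{B_R}|v|^{2-n-sp}\,\mathrm{d}v$, is not finite once $sp\ge 2$ (so in particular for any $p\ge 2$ with $s$ near $1$); the correct bound must retain the $\Phi_p(u(x)-u(\xi))$ factor, i.e.\ one should estimate $c_{n,s,p}\int_{B_R}|v|^{(p-1)+2-n-sp}\,\mathrm{d}v$, which is indeed $O(1-s)$ by the same arithmetic. Second, and more substantively, the expansion $\mathcal{K}_{n,s,p}(\rho)=\rho^{-n-sp}\bigl(1+E(s,\rho)\bigr)$ is stated with $E(s,\rho)=O(\rho^{2})$, but what you actually need is that $|E(s,\rho)|\lesssim\rho^{\gamma}$ for some $\gamma>0$ \emph{uniformly in $s$ near $1$}; this uniformity is nontrivial. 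In odd dimensions it follows from the explicit iterated $\tfrac{-\partial_\rho}{\sinh\rho}$ formula together with the near-origin expansion of $K_\nu$ for $\nu$ bounded away from the unit interval, but in even dimensions you need a \emph{quantitative}, $s$-uniform version of the asymptotic equivalence established in \Cref{lem:from3to2} (which the paper states and uses only in the qualitative $\rho\to 0^+$ sense, with a fixed $\nu$). Without that strengthening, the even-dimensional case of your argument is incomplete. The outer estimate requires the same care: the large-$\rho$ asymptotics for $\mathcal{K}_{n,s,p}$ must be shown to hold with constants uniform in $s$, which is what the paper's \Cref{lem:infty} accomplishes.
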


The pointwise convergence of the fractional $p$-Laplacian on Euclidean spaces is well known \cite{BS22,DNPV12,IN10}. Recall that the proof uses Taylor's theorem and the following computations:
\begin{equation} \label{eq:integrals}
\begin{split}
\int_{\mathbb{S}^{n-1}} \int_{R}^{\infty} K(\rho)\rho^{n-1} \,\mathrm{d}\rho \,\mathrm{d}\omega &= \frac{|\mathbb{S}^{n-1}|}{sp} R^{-sp}, \\
\int_{\mathbb{S}^{n-1}} \int_{0}^{R} K(\rho)\rho^{p+n-1} \,\mathrm{d}\rho \,\mathrm{d}\omega &= \frac{|\mathbb{S}^{n-1}|}{p(1-s)} R^{p(1-s)}, \\
\int_{\mathbb{S}^{n-1}} \int_{0}^{R} K(\rho)\rho^{\beta+p+n-1} \,\mathrm{d}\rho \,\mathrm{d}\omega &= \frac{|\mathbb{S}^{n-1}|}{\beta +p(1-s)} R^{\beta+p(1-s)},
\end{split}
\end{equation}
where $\beta > 0$ and $K(\rho) = \rho^{-n-sp}$ is the kernel for the fractional $p$-Laplacian on $\mathbb{R}^{n}$. However, in our framework we need the integrals in \eqref{eq:integrals} with the kernel $K$ and the volume element $\rho^{n-1} \,\mathrm{d}\rho \,\mathrm{d}\omega$ replaced by $\mathcal{K}_{n,s,p}$ and $\sinh^{n-1} \rho \,\mathrm{d}\rho \,\mathrm{d}\omega$, respectively. These integrals do not seem to be of a form that is easily computed. Instead, we compute the limits of these integrals as $s \to 1^{-}$, which are sufficient to establish \Cref{thm:convergence}. This is still not straightforward, but can be obtained by using the asymptotic behavior of modified Bessel functions.

The authors would like to thank the anonymous referees for their careful reading of the manuscript and for providing helpful comments.

The paper is organized as follows. In \Cref{sec:preliminaries} we recall the hyperboloid model and study the modified Bessel function and its properties. \Cref{sec:kernel} is devoted to the proofs of \Cref{prop-asymptotic} and \Cref{thm-Bochner}. In \Cref{sec:extension}, we relate the heat semigroup to the extension problem \eqref{eq:ext-prob} and find the Poisson formula. Using the Poisson formula and the representation of the fractional $p$-Laplacian, we prove \Cref{thm:extension}. Finally, we prove the pointwise convergence result, \Cref{thm:convergence}, in \Cref{sec:convergence}. An auxiliary result can be found in \Cref{sec:appendix}.


\section{Preliminaries} \label{sec:preliminaries}


In this section, we recall the basics of the hyperbolic spaces and collect some facts about the modified Bessel function.

\subsection{The hyperbolic space}

There are several models for hyperbolic spaces, but let us focus on the hyperboloid model in this paper. The hyperboloid model is given by
\begin{equation*}
\mathbb{H}^{n} = \left\lbrace (x_{0}, \dots, x_{n}) \in \mathbb{R}^{n+1}: x_{0}^{2}-x_{1}^{2}-\cdots-x_{n}^{2} = 1, x_{0} > 0 \right\rbrace
\end{equation*}
with the Lorentzian metric $-\mathrm{d} x_{0}^{2}+\mathrm{d} x_{1}^{2}+\cdots+ \mathrm{d} x_{n}^{2}$ in $\mathbb{R}^{n+1}$. The Lorentzian metric induces the natural internal product
\begin{equation*}
[x, \xi] = x_{0} \xi_{0} - x_{1} \xi_{1} - \cdots - x_{n} \xi_{n}
\end{equation*}
on $\mathbb{H}^{n}$. Moreover, the distance between two points $x$ and $\xi$ is given by
\begin{equation*}
d(x, \xi) = \cosh^{-1}([x, \xi]).
\end{equation*}
Using the polar coordinates, $\mathbb{H}^{n}$ can also be realized as
\begin{equation*}
\mathbb{H}^{n} = \left\lbrace x=(\cosh r, \sinh r \, \omega) \in \mathbb{R}^{n+1}: r \geq 0, \omega \in \mathbb{S}^{n-1} \right\rbrace.
\end{equation*}
Then, the metric and the volume element are given by $\mathrm{d} r^{2} + \sinh^2 r \, \mathrm{d} \omega^{2}$ and $\sinh^{n-1} r \, \mathrm{d} r \, \mathrm{d} \omega$, respectively. 

\subsection{The modified Bessel function} \label{sec:Bessel}

The modified Bessel functions naturally appear in the study of hyperbolic geometry. In this paper, they are used to describe the kernel of the fractional $p$-Laplacian and the Poisson kernel. For this purpose, we recall the definition and some properties of the modified Bessel functions.

We call the ordinary differential equation
\begin{equation*}
\rho^{2} \frac{\mathrm{d}^{2} y}{\mathrm{d} \rho^{2}} + \rho \frac{\mathrm{d} y}{\mathrm{d} \rho} - (\rho^{2} + \nu^{2})y =0
\end{equation*}
the modified Bessel equation. The solutions are given by
\begin{equation*}
I_{\nu}(\rho) = \sum_{j=0}^{\infty} \frac{1}{j! \Gamma(\nu+j+1)}\left( \frac{\rho}{2} \right)^{2j+\nu} \quad \text{and} \quad K_{\nu}(\rho) = \frac{\pi}{2} \frac{I_{-\nu}(\rho)-I_{\nu}(\rho)}{\sin \nu \pi},
\end{equation*}
and they are called the {\it modified Bessel functions of the first and the second kind}, respectively. Since only $K_{\nu}$ appears in this work, we focus on the properties of $K_{\nu}$. This function has the following integral representation (see \cite[10.32.10]{OLBC10}):
\begin{equation} \label{eq:K-integral}
K_{\nu}(\rho) = \frac{1}{2}\left( \frac{1}{2}\rho \right)^{\nu} \int_{0}^{\infty} e^{-t-\frac{\rho^2}{4t}} t^{-\nu-1} \,\mathrm{d}t.
\end{equation}
The asymptotic behavior of $K_{\nu}$ is given by
\begin{equation} \label{eq:K-asymptotic}
\begin{split}
&K_{\nu}(\rho) \sim \frac{1}{2} \Gamma(\nu) \left( \frac{\rho}{2} \right)^{-\nu} \quad\text{as } \rho \to 0^{+}, \text{ for } \nu > 0, \text{ and} \\
&K_{\nu}(\rho) \sim \sqrt{\frac{\pi}{2\rho}} e^{-\rho} \quad\text{as } \rho \to \infty.
\end{split}
\end{equation}
Moreover, $K_{\nu}$ satisfies the following recurrence relations:
\begin{equation} \label{eq:K-recurrence}
K_{\nu}' = -K_{\nu-1}-\frac{\nu}{\rho}K_{\nu} \quad\text{and}\quad K_{\nu}' = -K_{\nu+1}+\frac{\nu}{\rho} K_{\nu}.
\end{equation}
We also recall that $K_{\nu}$ is increasing with respect to $\nu > 0$. For further properties of the modified Bessel functions, the reader may consult the handbook \cite{OLBC10}.

In the sequel, functions of the form $\rho^{-\nu}K_{\nu}(a\rho)$ with $\nu \in \mathbb{R}$ and $a > 0$ will appear frequently. For notational convenience, we define
\begin{equation} \label{eq:K-scr}
\widetilde{K}_{\nu, a}(\rho) := \rho^{-\nu} K_{\nu}(a\rho).
\end{equation}
Then, it follows from \eqref{eq:K-recurrence}
\begin{equation} \label{eq:scrK-recurrence}
-\partial_{\rho} (\widetilde{K}_{\nu, a}(f(\rho))) = a f'(\rho) f(\rho) \widetilde{K}_{\nu+1, a}(f(\rho))
\end{equation}
for any differentiable function $f : (0, \infty) \to (0, \infty)$.


\section{Nonlinear Bochner's formula} \label{sec:kernel}


The main goal of this section is to prove \Cref{thm-Bochner}. One of the crucial tools in the proof is the explicit formula for the heat kernel $h$ on hyperbolic spaces, which is given in \cite{GN98} as follows:
\begin{equation} \label{eq:hk-odd}
h(t, \rho) = \frac{1}{(2\pi)^{m}} \frac{1}{(4\pi t)^{1/2}} \left( \frac{-\partial_{\rho}}{\sinh \rho} \right)^{m} e^{-m^{2} t-\frac{\rho^2}{4t}}
\end{equation}
when $n = 2m+1 \geq 1$ is odd, and
\begin{equation} \label{eq:hk-even}
h(t, \rho) = \frac{1}{2(2\pi)^{m+1/2}} t^{-3/2} e^{-\frac{(2m-1)^2}{4}t} \left( \frac{-\partial_{\rho}}{\sinh \rho} \right)^{m-1} \int_{\rho}^{\infty} \frac{r e^{-\frac{r^2}{4t}}}{\sqrt{\cosh r-\cosh \rho}} \,\mathrm{d}r
\end{equation}
when $n=2m \geq 2$ is even. Note that the Cauchy problem \eqref{eq:Cauchy} has the unique solution
\begin{equation}\label{eq-w}
w(x,t) = \int_{\mathbb{H}^{n}} h(t, d(x, \xi)) f(\xi) \,\mathrm{d}\xi,
\end{equation}
provided that $f$ is a bounded continuous function. In other words, the heat semigroup $e^{t\Delta_{\mathbb{H}^n}}[f]$ is given by \eqref{eq-w}.

Before proving \Cref{thm-Bochner}, we provide a series of auxiliary lemmas required for its proof. In the course of the argument, we also establish \Cref{prop-asymptotic}.

\begin{lemma} \label{lem:recurrence}
Let $\nu > 1/2$, $a \geq 1/2$, and $y \geq 0$. For $m \in \mathbb{N} \cup \lbrace 0 \rbrace$ define
\begin{equation*}
F_{m}(r) := \frac{\sinh r}{\sqrt{\cosh r - \cosh\rho}} \left( \frac{-\partial_{r}}{\sinh r} \right)^{m} \widetilde{K}_{\nu, a}\left( \sqrt{r^{2}+y^{2}} \right),
\end{equation*}
where $\widetilde{K}_{\nu, a}$ is the function given in \eqref{eq:K-scr}. Then, $F_{m}$ is integrable on $(\rho, \infty)$ and satisfies
\begin{equation} \label{eq:recurrence-F}
\left( \frac{-\partial_{\rho}}{\sinh \rho} \right) \int_{\rho}^{\infty} F_{m}(r) \,\mathrm{d}r = \int_{\rho}^{\infty} F_{m+1}(r) \,\mathrm{d}r
\end{equation}
for all $\rho>0$ and $m \in \mathbb{N} \cup \lbrace 0 \rbrace$.
\end{lemma}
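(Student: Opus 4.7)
My plan is to reduce the recurrence to a change of variables that regularizes the square-root singularity, followed by differentiation under the integral sign. Write
$G_{m}(r) := \bigl(-\partial_{r}/\sinh r\bigr)^{m} \mathscr{K}_{\nu, a}(\sqrt{r^{2}+y^{2}})$,
so that $F_{m}(r) = \sinh r \cdot G_{m}(r)/\sqrt{\cosh r - \cosh \rho}$. The substitution $u = \cosh r - \cosh \rho$, with $\mathrm{d}u = \sinh r \, \mathrm{d}r$, transforms the integral into
\[
\int_{\rho}^{\infty} F_{m}(r) \, \mathrm{d}r = \int_{0}^{\infty} \frac{G_{m}(r(u, \rho))}{\sqrt{u}} \, \mathrm{d}u, \qquad r(u, \rho) := \cosh^{-1}(u + \cosh \rho).
\]
The point of this substitution is that the singular weight is now a fixed $u^{-1/2}$ factor and the $\rho$-dependence sits entirely in $G_{m} \circ r(\cdot, \rho)$.

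Differentiation in $\rho$ then proceeds cleanly because $\partial_{\rho} r(u, \rho) = \sinh \rho/\sinh r$, which produces exactly the factor $\sinh \rho$ that cancels against the $1/\sinh \rho$ on the left side of \eqref{eq:recurrence-F}. Applying Leibniz's rule and then undoing the substitution gives
\[
\frac{-\partial_{\rho}}{\sinh \rho} \int_{\rho}^{\infty} F_{m}(r) \, \mathrm{d}r = -\int_{\rho}^{\infty} \frac{G_{m}'(r)}{\sqrt{\cosh r - \cosh \rho}} \, \mathrm{d}r = \int_{\rho}^{\infty} \frac{\sinh r}{\sqrt{\cosh r - \cosh \rho}} \cdot \frac{-G_{m}'(r)}{\sinh r} \, \mathrm{d}r,
\]
and the last integrand is precisely $F_{m+1}(r)$ by the definition of $G_{m+1} = (-\partial_{r}/\sinh r) G_{m}$. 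This establishes \eqref{eq:recurrence-F} modulo integrability and the justification of the Leibniz swap.

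The main obstacle is the integrability estimate, which also supplies the dominating function needed by dominated convergence in the Leibniz step. The identity recorded right after \eqref{eq:K-scr}, applied with $f(r) = \sqrt{r^{2}+y^{2}}$, specializes to $-\partial_{r}\mathscr{K}_{\nu, a}(\sqrt{r^{2}+y^{2}}) = a r \, \mathscr{K}_{\nu+1, a}(\sqrt{r^{2}+y^{2}})$. A short induction on $m$ then expresses $G_{m}$ as a finite combination of smooth rational functions of $r$, $\sinh r$, $\cosh r$ times $\mathscr{K}_{\nu+j, a}(\sqrt{r^{2}+y^{2}})$ for $0 \le j \le m$. Combining this with the large-argument asymptotic in \eqref{eq:K-asymptotic}, each $\mathscr{K}_{\nu+j, a}(\sqrt{r^{2}+y^{2}}) = O(r^{-\nu-j-1/2} e^{-a r})$ as $r \to \infty$, while $\sinh r/\sqrt{\cosh r - \cosh \rho} = O(e^{r/2})$; hence $F_{m}(r)$ decays on tails at least like $r^{-\nu-1/2} e^{-(a-1/2)r}$, which is integrable precisely because $a \geq 1/2$ and $\nu > 1/2$, the borderline $a = 1/2$ being the tightest case and exactly where the hypothesis $\nu > 1/2$ is consumed. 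Near $r = \rho$, $\sqrt{\cosh r - \cosh \rho} \sim \sqrt{\sinh \rho \,(r-\rho)}$ produces the integrable singularity $(r-\rho)^{-1/2}$ while $G_{m}$ remains smooth. Taking these bounds uniformly over a small interval of $\rho$ and noting that differentiation only replaces $G_{m}$ by $G_{m}' \cdot \sinh \rho/\sinh r$, a bounded modification near $u = 0$ and an exponentially small one at infinity, supplies the dominating function and closes the argument.
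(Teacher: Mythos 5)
Your proof is correct, and it takes a genuinely different route from the paper. The paper rewrites $\frac{\sinh r}{\sqrt{\cosh r - \cosh\rho}} = 2\partial_r \sqrt{\cosh r - \cosh\rho}$ and integrates by parts in $r$ (the boundary terms vanish because $\sqrt{\cosh r - \cosh\rho}$ vanishes at $r=\rho$ and decays exponentially at $+\infty$), which converts $\int_\rho^\infty F_m$ into $\int_\rho^\infty 2\sinh r\,\sqrt{\cosh r - \cosh\rho}\,G_{m+1}(r)\,\mathrm{d}r$; since this integrand is now regular at $r=\rho$, the Leibniz rule applies transparently and the differentiation in $\rho$ produces precisely the desired factor $\sinh\rho/\sqrt{\cosh r-\cosh\rho}$. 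You instead substitute $u = \cosh r - \cosh\rho$, which fixes the singular factor $u^{-1/2}$ with $\rho$-independent limits and pushes all $\rho$-dependence into $r(u,\rho)$ via $\partial_\rho r = \sinh\rho/\sinh r$. The two tricks are dual: the paper regularizes the integrand and keeps the variable limit, you keep the singular weight but freeze the domain. Both reduce the problem to a routine interchange of $\partial_\rho$ and $\int$, and both rely on the same tail estimate $G_m(r) = O\bigl(r^{-\nu-1/2}e^{-(m+a)r}\bigr)$, $\sinh r/\sqrt{\cosh r-\cosh\rho} = O(e^{r/2})$, with $\nu > 1/2$ and $a\geq 1/2$ closing the borderline $m=0$, $a=1/2$ case. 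Your integrability analysis is essentially identical in substance to the paper's even though the final step is organized differently.
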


\begin{proof}
Note that for any $j \geq 1$
\begin{equation*}
-\partial_{r} \left( \frac{(e^{r}+e^{-r})^{j-1}}{(e^{r}-e^{-{r}})^{j}} \right) = j\frac{(e^{r}+e^{-r})^{j}}{(e^{r}-e^{-r})^{j+1}} - (j-1)\frac{(e^{r}+e^{-r})^{j-2}}{(e^{r}-e^{-r})^{j-1}}.
\end{equation*}
Therefore, all derivatives of $\frac{1}{\sinh r}$ (and $\frac{r}{\sinh r}$) have the same asymptotic behavior as $e^{-r}$ (and $re^{-r}$, respectively) as $r \to \infty$. Hence, $F_{m}(r) \sim r^{-\nu-1/2}e^{(1/2-m-a)r}$ as $r \to \infty$, which shows that the function $F_{m}$ is integrable.

Using the integration by parts, we have
\begin{equation*}
\begin{split}
\int_{\rho}^{\infty} F_{m}(r) \,\mathrm{d}r
&= \int_{\rho}^{\infty} 2\partial_{r} \left( \sqrt{\cosh r - \cosh \rho} \right) \left( \frac{-\partial_r}{\sinh r} \right)^{m} \widetilde{K}_{\nu, a} \left( \sqrt{r^{2}+y^{2}} \right)\mathrm{d}r \\
&= \int_{\rho}^{\infty} 2 \sinh r \sqrt{\cosh r - \cosh \rho} \left( \frac{-\partial_{r}}{\sinh r} \right)^{m+1} \widetilde{K}_{\nu, a} \left( \sqrt{r^{2}+y^{2}} \right)\mathrm{d}r.
\end{split}
\end{equation*}
Thus, the recurrence relation \eqref{eq:recurrence-F} follows by applying the Leibniz integral rule.
\end{proof}

\begin{lemma} \label{lem:positivity}
Let $\nu > 1/2$, $a \geq 1/2$, and $m \in \mathbb{N} \cup \lbrace 0 \rbrace$. Then, the function
\begin{equation*}
\rho \mapsto \left( \frac{-\partial_{\rho}}{\sinh \rho} \right)^{m} \widetilde{K}_{\nu,a}(\rho)
\end{equation*}
is positive.
\end{lemma}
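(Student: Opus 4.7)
Writing $L := \frac{-\partial_\rho}{\sinh\rho}$, the case $m = 0$ is trivial since $K_\nu > 0$, so I assume $m \geq 1$ below. The strategy is to use the integral representation \eqref{eq:K-integral} to express $\mathscr{K}_{\nu,a}(\rho)$ as a positive mixture of Gaussians
\[
\mathscr{K}_{\nu,a}(\rho) = \frac{a^\nu}{2^{\nu+1}} \int_0^\infty e^{-t}\, e^{-a^2\rho^2/(4t)}\, t^{-\nu-1}\,\mathrm{d}t,
\]
to commute $L^m$ with this integral, and thereby to reduce the lemma to the pointwise positivity
\[
L^m\!\bigl[e^{-\alpha\rho^2}\bigr] > 0 \quad \text{for every } \alpha > 0.
\]

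This key pointwise statement is read off directly from the heat kernel formula \eqref{eq:hk-odd} on $\mathbb{H}^{2m+1}$. Since $e^{-m^2\tau}$ is $\rho$-independent and therefore commutes with $L^m$, the formula rearranges to
\[
L^m\!\bigl[e^{-\rho^2/(4\tau)}\bigr] = (2\pi)^m (4\pi\tau)^{1/2}\, e^{m^2\tau}\, p_{2m+1}(\tau,\rho),
\]
whose right-hand side is strictly positive since the heat kernel on any complete Riemannian manifold is strictly positive for all $\tau > 0$ (a classical consequence of the strong maximum principle). Choosing $\tau = t/a^2$ gives $L^m\!\bigl[e^{-a^2\rho^2/(4t)}\bigr] > 0$ pointwise in $t$, so the integrand in the reduced formula above is positive and the conclusion follows.

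The main technical obstacle is justifying the interchange of the $m$-fold operator $L^m$ with the improper integral. Each $\rho$-derivative of $e^{-a^2\rho^2/(4t)}$ is a polynomial in $\rho/t$ times the Gaussian itself, and on a compact subinterval of $(0,\infty)$ the iterated divisions by $\sinh\rho$ contribute only bounded factors; together with the weight $e^{-t}$ near $t = \infty$ and the rapid Gaussian decay near $t = 0^+$, one obtains integrable majorants at both endpoints, so that repeated dominated convergence legitimizes the interchange. Equivalently, one can first carry out the differentiation on $[\delta,\infty)$ and pass $\delta \searrow 0$ at the end.
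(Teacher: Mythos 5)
Your proposal follows essentially the same route as the paper: write $\mathscr{K}_{\nu,a}$ via the integral representation \eqref{eq:K-integral} as a positive mixture of Gaussians, pass $\bigl(\tfrac{-\partial_\rho}{\sinh\rho}\bigr)^m$ through the integral, and recognize the result as a positive multiple of the odd-dimensional heat kernel from \eqref{eq:hk-odd}, whose positivity yields the claim. The only cosmetic difference is the last step: you invoke the general strong maximum principle for heat kernels on complete manifolds, whereas the paper cites \cite{CY81} that $p(t,\rho)$ is strictly decreasing in $\rho$ and tends to $0$ as $\rho\to\infty$; both are valid justifications.
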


\begin{proof}
Using the formula \eqref{eq:K-integral} and change of variables, we have
\begin{equation*}
\widetilde{K}_{\nu,a}(\rho) = \frac{a^{\nu}}{2^{\nu+1}} \int_{0}^{\infty} e^{-t-\frac{(a\rho)^2}{4t}} t^{-\nu-1} \,\mathrm{d}t = \frac{1}{2(2a)^{\nu}} \int_{0}^{\infty} \frac{1}{t^{1/2}} e^{-a^{2}t-\frac{\rho^{2}}{4t}} \frac{\mathrm{d}t}{t^{\nu+1/2}}.
\end{equation*}
Thus, recalling the expression of the heat kernel \eqref{eq:hk-odd} for odd-dimensional case, we obtain
\begin{equation*}
\left( \frac{-\partial_{\rho}}{\sinh \rho} \right)^{m} \widetilde{K}_{\nu,a}(\rho) = \int_{0}^{\infty} e^{(m^{2}-a^{2})t} h(t, \rho) \frac{\mathrm{d}t}{t^{\nu+1/2}}.
\end{equation*}
The conclusion follows from the positivity of the heat kernel $h$.
\end{proof}

As a consequence of \Cref{lem:positivity}, we obtain the positivity of the kernel $\mathcal{K}_{n,s,p}$.

\begin{corollary} \label{cor:nonnegativity}
Let $n \in \mathbb{N}$, $0<s<1$, and $p > 1$. The kernel $\mathcal{K}_{n,s,p}$ is positive.
\end{corollary}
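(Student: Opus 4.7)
The plan is to reduce the claim directly to \Cref{lem:positivity}, treating the odd and even dimensional cases separately and observing that all of the relevant parameter hypotheses are met automatically from $s\in(0,1)$ and $p>1$.

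First I would address odd $n\geq 3$. Writing $n=2m+1$ with $m\geq 1$, the explicit form of $\mathcal{K}_{n,s,p}$ given in \Cref{thm:kernel} reads
\begin{equation*}
\mathcal{K}_{n,s,p}(\rho) = C_{2}\left(\frac{-\partial_{\rho}}{\sinh\rho}\right)^{m}\mathscr{K}_{\frac{1+sp}{2},\frac{n-1}{2}}(\rho).
\end{equation*}
Since $s\in(0,1)$ and $p>1$ give $\nu=\frac{1+sp}{2}>\tfrac12$, and since $a=\frac{n-1}{2}\geq 1\geq\tfrac12$, \Cref{lem:positivity} applies with this $\nu$, $a$, and $m$ to show the iterated operator acts positively. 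As $C_{2}>0$, positivity of $\mathcal{K}_{n,s,p}$ is immediate.

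Next I would treat even $n\geq 2$. Writing $n=2m$ with $m\geq 1$, the kernel becomes
\begin{equation*}
\mathcal{K}_{n,s,p}(\rho)=C_{2}\int_{\rho}^{\infty}\frac{\sinh r}{\sqrt{\pi}\sqrt{\cosh r-\cosh\rho}}\left(\frac{-\partial_{r}}{\sinh r}\right)^{m}\mathscr{K}_{\frac{1+sp}{2},\frac{n-1}{2}}(r)\,\mathrm{d}r.
\end{equation*}
The same verification $\nu=\frac{1+sp}{2}>\tfrac12$ and $a=\frac{n-1}{2}\geq\tfrac12$ lets me invoke \Cref{lem:positivity} to conclude that the integrand factor $\bigl(\frac{-\partial_{r}}{\sinh r}\bigr)^{m}\mathscr{K}_{\frac{1+sp}{2},\frac{n-1}{2}}(r)$ is pointwise positive on $(0,\infty)$. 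The remaining factor $\sinh r/\sqrt{\pi(\cosh r-\cosh\rho)}$ is positive for $r>\rho\geq 0$, so the integrand is positive throughout $(\rho,\infty)$ and the integral is positive.

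There is essentially no obstacle here: the content was already packaged in \Cref{lem:positivity}, whose proof exploited the positivity of the hyperbolic heat kernel $p(t,\rho)$ via the representation obtained from \eqref{eq:K-integral}. The only small bookkeeping point is checking the parameter ranges $\nu>\tfrac12$ and $a\geq\tfrac12$, both of which hold uniformly in $s\in(0,1)$, $p>1$, and $n\geq 2$.
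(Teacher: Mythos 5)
Your proposal is correct and matches the paper's approach: the paper presents this corollary as an immediate consequence of \Cref{lem:positivity} without further argument, and your write-up simply spells out the routine parameter checks ($\nu=\tfrac{1+sp}{2}>\tfrac12$, $a=\tfrac{n-1}{2}\geq\tfrac12$) and the positivity of $C_2$ and of the remaining integrand factor in the even case. Nothing is missing.
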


In the following lemma, we consider a class of functions more general than $\mathcal{K}_{n, s, p}$ to allow for a broader applicability in later results. Note that \Cref{prop-asymptotic} follows from \Cref{lem-asymptotic} with $\nu=\frac{1+sp}{2}$ and $a=\frac{n-1}{2}$, together with \eqref{eq:K-asymptotic}.

\begin{lemma}\label{lem-asymptotic}
Let $\nu>1/2$, $a \geq 1/2$, and $y \geq 0$. Let
\begin{equation*}
K(\rho) = \left( \frac{-\partial_\rho}{\sinh \rho} \right)^{\frac{n-1}{2}} \widetilde{K}_{\nu,a}(\rho)
\end{equation*}
when $n \geq 3$ is odd and
\begin{equation*}
K(\rho) = \int_{\rho}^{\infty} \frac{\sinh r}{\sqrt{\cosh r - \cosh \rho}} \left( \frac{-\partial_r}{\sinh r} \right)^{\frac{n}{2}} \widetilde{K}_{\nu,a}(r) \mathrm{d}r
\end{equation*}
when $n \geq 2$ is even. Then, there exist $c, C>0$ such that
\begin{equation*}
c\rho^{-\nu} (\sinh\rho)^{-\frac{n-1}{2}} K_{\frac{n-1}{2}+\nu}(a\rho) \leq K(\rho) \leq C\rho^{-\nu} (\sinh\rho)^{-\frac{n-1}{2}} K_{\frac{n-1}{2}+\nu}(a\rho)
\end{equation*}
for all $\rho > 0$.
\end{lemma}

In order to prove \Cref{lem-asymptotic}, we need the following lemma.

\begin{lemma} \label{lem:even_dim}
  Let $a>0$ and $\nu > -\frac{n-1}{2}$. Then
  \begin{align*}
    \int^{\infty}_{\rho} \frac{\sinh^{-n/2+1} r}{\sqrt{\cosh r-\cosh \rho}} r^{-\nu}K_{n/2 + \nu} (ar) \,\mathrm{d} r \sim \sqrt{\frac{\pi}{2}} \frac{\Gamma(\nu+\frac{n-1}{2})}{\Gamma(\nu+\frac{n}{2})} \rho^{-\nu}\sinh^{-n/2 +1} (\rho)K_{n/2+\nu} (a\rho) 
  \end{align*}
  as $\rho \to 0^{+}$ up to dimensional constants.
\end{lemma}

\begin{proof}
By the change of variables $r=\rho t$, we have
  \begin{align*}
    &\int_\rho^\infty \frac{1}{\sqrt{\cosh r-\cosh \rho}} \frac{r^{-\nu}}{\rho^{-\nu}} \frac{\sinh^{-n/2+1}r}{\sinh^{-n/2+1}\rho} \frac{K_{n/2 + \nu} (ar)}{K_{n/2 + \nu} (a\rho)} \,\mathrm{d} r \\
    &= \int ^ \infty_1 \frac{\rho t^{-\nu}}{\sqrt{\cosh(\rho t)-\cosh \rho}} \frac{\sinh^{-n/2+1}(\rho t)}{\sinh^{-n/2+1}\rho}\frac{K_{n/2 + \nu} (a\rho t)}{K_{n/2 + \nu} (a\rho)} \,\mathrm{d}t.
  \end{align*}
  We define for each $\rho \in (0,1)$ a function $f_\rho$ by
  \begin{align*}
    f_\rho(t) = \frac{\rho t^{-\nu}}{\sqrt{\cosh(\rho t)-\cosh \rho}} \frac{\sinh^{-n/2+1}(\rho t)}{\sinh^{-n/2+1}\rho}\frac{K_{n/2 + \nu} (a\rho t)}{K_{n/2 + \nu} (a\rho)}, \quad t \in (1, \infty).
  \end{align*}
Note that
  \begin{align*}
    \frac{\cosh(\rho t) - \cosh\rho}{\rho^2}   \ge \frac{1}{2}(t^2-1) \quad\text{and}\quad \sinh(\rho t) \ge (\sinh\rho)t.
  \end{align*}
  Moreover, by \cite[Equation (2.17)]{JB96}, we have
  \begin{align*}
    \frac{K_{n/2 + \nu} (a\rho t)}{K_{n/2 + \nu} (a\rho)} \le t^{-\frac{n}{2}-\nu}.
  \end{align*}
  Thus, $f_\rho$ is bounded from above by a function
  \begin{align*}
    f(t) = \frac{t^{-n-2\nu +1 }}{\sqrt{(t^2-1)/2}},
  \end{align*}
  which is integrable on $(0,\infty)$. Indeed, by the change of variables $t^2-1 = \tau$, we obtain
  \begin{align*}
    \int_1^\infty f(t) \,\mathrm{d} t = \frac{1}{\sqrt{2}} \int_0 ^ \infty \frac{\tau^{-1/2}}{(1+\tau)^{n/2+\nu}} \,\mathrm{d} \tau  = \frac{1}{\sqrt{2}} B\left(\frac{1}{2}, \nu+\frac{n-1}{2}   \right)= \sqrt{\frac{\pi}{2}} \frac{\Gamma(\nu+\frac{n-1}{2})}{\Gamma(\nu+\frac{n}{2})} < \infty,
  \end{align*}
  where $B$ is Euler's Beta Integral (see \cite[5.12.3]{OLBC10}).
  
  For fixed $t \in (1, \infty)$, we have
  \begin{align*}
    \frac{\cosh(\rho t) - \cosh\rho}{\rho^2} \to \frac{1}{2}(t^2-1), \quad \frac{\sinh(\rho t)}{\sinh\rho} \to t, \quad \mbox{and} \quad \frac{K_{n/2 + \nu} (a\rho t)}{K_{n/2 + \nu} (a\rho)}  \to t^{-n/2 - \nu }
  \end{align*}
  as $\rho \to 0^{+}$. Hence, we obtain $\lim_{\rho \to 0} f_\rho(t) = f(t)$. Therefore, the Lebesgue dominated convergence theorem concludes the lemma. 
\end{proof}

\begin{proof}[Proof of \Cref{lem-asymptotic}]
We prove the odd-dimensional case $n=2m+1$ first. We already provided a simple way to chase the asymptotic behavior of operator $\left ( \frac{-\partial_r}{\sinh r} \right )^m$ at the former part of proof of \Cref{lem:recurrence}. By applying the similar argument one can shows that $K(\rho) \sim \rho^{-2(m+\nu)}$ as $\rho \to 0^+$ and $K(\rho) \sim \rho^{-\frac{1}{2}-\nu}e^{-(a+m)\rho}$ as $\rho \to \infty$. In other words, $K(\rho) \sim \rho^{-\nu} (\sinh\rho)^{-m} K_{m+\nu}(a\rho)$ as $\rho \to 0^+$ and $\rho \to \infty$ by \eqref{eq:K-asymptotic}. Since $K$ is positive by \Cref{lem:positivity} and continuous in $(0, \infty)$, there exist $c, C>0$ such that
\begin{equation*}
    c\rho^{-\nu} (\sinh\rho)^{-m} K_{m+\nu}(a\rho) \leq K(\rho) \leq C\rho^{-\nu} (\sinh\rho)^{-m} K_{m+\nu}(a\rho)
\end{equation*}
for all $\rho >0$.

Let us consider the even-dimensional case $n=2m$. By using the result for the odd-dimensional case, we obtain for $\rho$ sufficiently close to $\infty$ that
\begin{align*}
K(\rho)
&\le C \int_\rho^\infty \frac{e^r}{\sqrt{\sinh \frac{r-\rho}{2}} \sqrt{\sinh \frac{r+\rho}{2}}} r^{-\frac{1}{2}-\nu} e^{-(a+m) r} \,\mathrm{d}r \\
& \le C \frac{1}{\sqrt{\sinh \rho}}\int _0^\infty \frac{1}{\sqrt{\sinh \frac{t}{2}}} (t+\rho)^{-\frac{1}{2}-\nu} e^{-(a+m-1)(t+\rho)} \,\mathrm{d}t \\
& \le C \rho^{-\frac{1}{2}-\nu} e^{-(a+\frac{2m-1}{2})\rho} \int_0^\infty \frac{1}{\sqrt{\sinh \frac{t}{2}}} dt \\
&\leq C \rho^{-\frac{1}{2}-\nu} e^{-(a+\frac{n-1}{2})\rho}
\end{align*}
and
\begin{align*}
K(\rho)
&= \int_\rho^\infty 2\sinh r \sqrt{\cosh r - \cosh \rho} \left(\frac{-\partial_r}{\sinh r} \right)^{\frac{n+2}{2}} \widetilde{K}_{\nu, a}(r) \,\mathrm{d}r \\
& \ge c \int_\rho^\infty e^r\sqrt{\sinh \frac{r-\rho}{2}} \sqrt{\sinh \frac{r+\rho}{2}} r^{-\frac{1}{2}-\nu} e^{-(a+m+1) r} \,\mathrm{d}r \\
& \ge c \sqrt{\sinh \rho} \int_0^\infty \sqrt{\sinh \frac{t}{2}} (t+\rho)^{-\frac{1}{2}-\nu} e^{-(a+m)(t+\rho)} \,\mathrm{d}t \\
& \ge c \rho^{-\frac{1}{2}-\nu} e^{-(a+\frac{2m-1}{2})\rho} \int _0 ^\infty \sqrt{\sinh \frac{t}{2}} (1+t)^{-\frac{1}{2}-\nu}e^{-(a+m)t} \,\mathrm{d}t \\
&\geq c \rho^{-\frac{1}{2}-\nu} e^{-(a+\frac{n-1}{2})\rho}
  \end{align*}
for some constants $c, C>0$. In other words, $K$ is comparable to $\rho^{-\nu} (\sinh\rho)^{-\frac{n-1}{2}} K_{\frac{n-1}{2}+\nu}(a\rho)$ as $\rho \to \infty$.

On the other hand, $K(\rho)$ is comparable to $\rho^{-2\nu-(n-1)}$, or to $\rho^{-\nu} (\sinh\rho)^{-\frac{n-1}{2}} K_{\frac{n-1}{2}+\nu}(a\rho)$, as $\rho \to 0^+$. Indeed, by using the result for the odd-dimensional case once again, we infer that $K(\rho)$ is comparable to
\begin{equation*}
\int_\rho^\infty \frac{\sinh^{-m+1} r}{\sqrt{\cosh r - \cosh \rho}} r^{-\nu}K_{m+\nu}(ar) \,\mathrm{d}r.
\end{equation*}
By \Cref{lem:even_dim}, it is in turn comparable to
\begin{equation*}
\rho^{-\nu} \sinh^{-m+1}(\rho) K_{m+\nu}(a\rho)
\end{equation*}
near $\rho=0^+$. Since $K$ is positive and continuous in $(0, \infty)$, the desired result follows.
\end{proof}

Let us now prove \Cref{thm-Bochner} using the heat kernel and previous lemmas.

\begin{proof} [Proof of \Cref{thm-Bochner}]
Let $\varepsilon > 0$ and define $g_{\varepsilon}(x,\xi) = \Phi_{p}(u(x)-u(\xi)) \chi_{d(x,\xi) > \varepsilon}$. The heat semigroup associated to $g_{\varepsilon}(x, \cdot)$ is given by
\begin{equation*}
e^{t\Delta_{\mathbb{H}^{n}}} [g_{\varepsilon}(x,\cdot)](x) = \int_{\mathbb{H}^{n}} \frac{1}{(2\pi)^{m}} \frac{1}{(4\pi t)^{1/2}} \left( \left( \frac{-\partial_{\rho}}{\sinh \rho} \right)^{m} e^{-m^{2} t-\frac{\rho^2}{4t}} \right) g_{\varepsilon}(x,\xi) \,\mathrm{d}\xi
\end{equation*}
when $n=2m+1 \geq 3$ is odd and
\begin{equation*}
e^{t\Delta_{\mathbb{H}^{n}}} [g_{\varepsilon}(x,\cdot)](x) = \int_{\mathbb{H}^{n}} \frac{t^{-3/2} e^{-\frac{(2m-1)^2}{4}t}}{2(2\pi)^{m+1/2}} \left( \frac{-\partial_{\rho}}{\sinh \rho} \right)^{m-1} \int_{\rho}^{\infty} \frac{r e^{-\frac{r^2}{4t}} \,\mathrm{d}r}{\sqrt{\cosh r-\cosh \rho}} \, g_{\varepsilon}(x, \xi) \,\mathrm{d}\xi
\end{equation*}
when $n=2m \geq 2$ is even, where $\rho = d(x,\xi)$. We will prove
\begin{equation} \label{eq:truncated}
c_{n, s, p} \int_{d(x, \xi) > \varepsilon} \Phi_p(u(x)-u(\xi)) \mathcal{K}_{n,s,p}(d(x,\xi)) \,\mathrm{d}\xi = C_{2} \int_{0}^{\infty} e^{t\Delta_{\mathbb{H}^{n}}}[g_{\varepsilon}(x,\cdot)](x) \frac{\mathrm{d}t}{t^{1+\frac{sp}{2}}}
\end{equation}
in both cases. 

Let us first consider the odd-dimensional case. We fix $\delta > 0$ and integrate the heat semigroup with respect to the singular measure $t^{-1-\frac{sp}{2}} \,\mathrm{d}t$ over the interval $(\delta, \infty)$ to obtain
\begin{equation} \label{eq:heat-semigroup-odd}
\begin{split}
&C_{2} \int_{\delta}^{\infty} e^{t\Delta_{\mathbb{H}^{n}}}[g_{\varepsilon}(x,\cdot)](x) \frac{\mathrm{d}t}{t^{1+\frac{sp}{2}}} \\
&= C_{2} \int_{\delta}^{\infty} \int_{\mathbb{H}^{n}} \frac{1}{(2\pi)^{m}} \frac{1}{(4\pi t)^{1/2}} \left( \left( \frac{-\partial_{\rho}}{\sinh \rho} \right)^{m} e^{-m^{2} t-\frac{\rho^2}{4t}} \right) g_{\varepsilon}(x,\xi) \,\mathrm{d}\xi \, \frac{\mathrm{d}t}{t^{1+\frac{sp}{2}}}.
\end{split}
\end{equation}
Note that this expression is well defined since $|e^{t\Delta_{\mathbb{H}^n}}[g_\varepsilon(x, \cdot)](x)| \leq C\|u\|_{L^\infty}^{p-1}$ for some constant $C>0$. By applying Fubini's theorem, we have
\begin{equation}\label{eq:heat-semigroup-odd2}
\begin{split}
&C_{2} \int_{\delta}^{\infty} \int_{\mathbb{H}^{n}} \frac{1}{(2\pi)^{m}} \frac{1}{(4\pi t)^{1/2}} \left( \left( \frac{-\partial_{\rho}}{\sinh \rho} \right)^{m} e^{-m^{2} t-\frac{\rho^2}{4t}} \right) g_{\varepsilon}(x,\xi) \,\mathrm{d}\xi \, \frac{\mathrm{d}t}{t^{1+\frac{sp}{2}}} \\
&= \frac{C_{2}}{(2\pi)^{m}(4\pi)^{1/2}} \int_{\mathbb{H}^{n}} \int_{\delta}^{\infty} \left( \left( \frac{-\partial_{\rho}}{\sinh \rho} \right)^{m} e^{-m^{2} t-\frac{\rho^2}{4t}}t^{-\frac{3+sp}{2}} \right) \, \mathrm{d}t \,g_{\varepsilon}(x,\xi) \,\mathrm{d}\xi.
\end{split}
\end{equation}
Furthermore, since all partial derivatives of $e^{-m^{2} t-\frac{\rho^2}{4t}}t^{-\frac{3+sp}{2}}$ with respect to $\rho$ are integrable over the interval $(\delta, \infty)$, the dominated convergence theorem shows that
\begin{equation}\label{eq:heat-semigroup-odd3}
\begin{split}
&\int_{\mathbb{H}^{n}} \int_{\delta}^{\infty} \left( \left( \frac{-\partial_{\rho}}{\sinh \rho} \right)^{m} e^{-m^{2} t-\frac{\rho^2}{4t}}t^{-\frac{3+sp}{2}} \right) \, \mathrm{d}t \,g_{\varepsilon}(x,\xi) \,\mathrm{d}\xi \\
&= \int_{\mathbb{H}^{n}} \left( \frac{-\partial_{\rho}}{\sinh \rho} \right)^{m} \left( \int_{\delta}^{\infty} e^{-m^{2} t-\frac{\rho^2}{4t}} t^{-\frac{3+sp}{2}} \, \mathrm{d}t \right) g_{\varepsilon}(x,\xi) \,\mathrm{d}\xi.
\end{split}
\end{equation}
Note that the function $e^{-m^{2}t-\frac{\rho^2}{4t}} t^{-\frac{3+sp}{2}}$ is integrable on $(0, \infty)$. Indeed, the formula \eqref{eq:K-integral} and the change of variables show
\begin{equation} \label{eq:K-cv}
\int_{0}^{\infty} e^{-m^{2}t-\frac{\rho^2}{4t}} t^{-\frac{3+sp}{2}} \,\mathrm{d}t = m^{1+sp} \int_{0}^{\infty} e^{-t-\frac{(m\rho)^2}{4t}} t^{-\frac{3+sp}{2}} \,\mathrm{d}t = 2(2m)^{\frac{1+sp}{2}} \widetilde{K}_{\frac{1+sp}{2},m}(\rho).
\end{equation}
Thus, \eqref{eq:truncated} in the odd-dimensional case follows by combining \eqref{eq:heat-semigroup-odd}--\eqref{eq:K-cv} and passing to the limit $\delta \searrow 0$.

We next consider the even-dimensional case. Similarly to as in the odd-dimensional case, we obtain
\begin{equation*}
\begin{split}
&C_{2} \int_{\delta}^{\infty} e^{t\Delta_{\mathbb{H}^{n}}}[g_{\varepsilon}(x,\cdot)](x) \frac{\mathrm{d}t}{t^{1+\frac{sp}{2}}} \\
&= C_{2} \int_{\delta}^{\infty} \int_{\mathbb{H}^{n}} \frac{t^{-3/2} e^{-\frac{(2m-1)^2}{4}t}}{2(2\pi)^{m+1/2}} \left( \frac{-\partial_{\rho}}{\sinh \rho} \right)^{m-1} \int_{\rho}^{\infty} \frac{r e^{-\frac{r^{2}}{4t}}}{\sqrt{\cosh r - \cosh \rho}} \,\mathrm{d}r \, g_{\varepsilon}(x,\xi) \,\mathrm{d}\xi \, \frac{\mathrm{d}t}{t^{1+\frac{sp}{2}}} \\
&= \frac{C_{2}}{2(2\pi)^{m+1/2}} \int_{\mathbb{H}^{n}} \left( \frac{-\partial_{\rho}}{\sinh \rho} \right)^{m-1} \int_{\rho}^{\infty} \left( \int_{\delta}^{\infty} e^{-\frac{(2m-1)^2}{4} t-\frac{r^2}{4t}} t^{-\frac{5+sp}{2}} \, \mathrm{d}t \right) \frac{r \,\mathrm{d}r \, g_{\varepsilon}(x, \xi) \,\mathrm{d}\xi}{\sqrt{\cosh r-\cosh \rho}}.
\end{split}
\end{equation*}
Moreover, we have from \eqref{eq:K-integral} and \eqref{eq:scrK-recurrence}
\begin{equation*}
\begin{split}
\int_{0}^{\infty} e^{-\frac{(2m-1)^2}{4} t-\frac{r^2}{4t}} t^{-\frac{5+sp}{2}} \, \mathrm{d}t
&= 2 (2m-1)^{\frac{3+sp}{2}} \widetilde{K}_{\frac{3+sp}{2}, \frac{2m-1}{2}}(r) \\
&= 4 (2m-1)^{\frac{1+sp}{2}} \left( \frac{-\partial_{r}}{r} \right) \widetilde{K}_{\frac{1+sp}{2}, \frac{2m-1}{2}}(r).
\end{split}
\end{equation*}
Thus, we deduce
\begin{equation*}
\begin{split}
&C_{2} \int_{0}^{\infty} e^{t\Delta_{\mathbb{H}^{n}}}[g_{\varepsilon}(x,\cdot)](x) \frac{\mathrm{d}t}{t^{1+\frac{sp}{2}}} \\
&= c_{n, s, p} C_{1} \int_{\mathbb{H}^{n}} \left( \frac{-\partial_\rho}{\sinh \rho} \right)^{m-1} \int_{\rho}^{\infty} \frac{(-\partial_r) \widetilde{K}_{\frac{1+sp}{2},\frac{2m-1}{2}}(r)}{\sqrt{\pi}\sqrt{\cosh r-\cosh \rho}} \,\mathrm{d}r \, g_{\varepsilon}(x, \xi) \,\mathrm{d}\xi \\
&= c_{n, s, p} C_{1} \int_{\mathbb{H}^{n}} \int_{\rho}^{\infty} \frac{\sinh r}{\sqrt{\pi}\sqrt{\cosh r-\cosh \rho}} \left(\frac{-\partial_r}{\sinh r} \right)^{m} \widetilde{K}_{\frac{1+sp}{2},\frac{2m-1}{2}}(r) \,\mathrm{d}r \, g_{\varepsilon}(x, \xi) \,\mathrm{d}\xi,
\end{split}
\end{equation*}
where we used \Cref{lem:recurrence} $(m-1)$-times with $\nu = \frac{1+sp}{2}$, $a=\frac{2m-1}{2}$, and $y = 0$ in the last equality. This proves \eqref{eq:truncated} in the even-dimensional case.

On the one hand, the integral in the right-hand side of \eqref{eq:truncated} converges to the Cauchy principal value
\begin{equation*}
\mathrm{P.V.} \int_{\mathbb{H}^{n}} \Phi_p(u(x)-u(\xi)) \mathcal{K}_{n,s,p}(d(x,\xi)) \,\mathrm{d}\xi
\end{equation*}
as $\varepsilon \searrow 0$. For the left-hand side of \eqref{eq:truncated}, on the other hand, we need to estimate
\begin{equation*}
A:= \int_{0}^{\infty} e^{t\Delta_{\mathbb{H}^{n}}}[\Phi_{p}(u(x)-u(\cdot))](x) \frac{\mathrm{d}t}{t^{1+\frac{sp}{2}}} - \int_{0}^{\infty} e^{t\Delta_{\mathbb{H}^{n}}}[g_{\varepsilon}(x,\cdot)](x) \frac{\mathrm{d}t}{t^{1+\frac{sp}{2}}}.
\end{equation*}
Proceeding as above, we have
\begin{equation*}
|A| \lesssim \left| \mathrm{P.V.} \int_{d(x, \xi)\leq \varepsilon} \Phi_{p}(u(x)-u(\xi)) \mathcal{K}_{n,s,p}(d(x, \xi)) \,\mathrm{d}\xi \right|.
\end{equation*}
Thus, applying \Cref{lem:appendix} to $K = \mathcal{K}_{n,s,p}\chi_{\{d(x, \xi)\leq \varepsilon\}}$ yields
\begin{equation} \label{eq:kernel-zero}
|A| \lesssim \int_{d(x,\xi) \leq \varepsilon} \rho^{\alpha} \mathcal{K}_{n,s,p}(\rho) \,\mathrm{d}y \lesssim \int_{0}^{\varepsilon} \rho^{\alpha} \mathcal{K}_{n,s,p}(\rho) \sinh^{n-1}\rho \,\mathrm{d}\rho,
\end{equation}
where $\alpha = 2p-2$ when $p \in (\frac{2}{2-s}, 2)$ and $\alpha = p$ when $p \in (1, \frac{2}{2-s}] \cup [2, \infty)$. Recall that $\mathcal{K}_{n,s,p}$ is positive by \Cref{cor:nonnegativity}. Moreover, \Cref{lem-asymptotic} (or \Cref{prop-asymptotic}) shows that the function $\rho^{\alpha} \mathcal{K}_{n,s,p}(\rho) \sinh^{n-1}\rho$ is integrable near zero and hence the right-hand side of \eqref{eq:kernel-zero} converges to zero as $\varepsilon \searrow 0$. Therefore, the left-hand side of \eqref{eq:truncated} converges to that of \eqref{eq:pw-representation} as $\varepsilon \searrow 0$.
\end{proof}


\section{Extension problem} \label{sec:extension}


In this section, we prove \Cref{thm:extension}, which provides another representation of the fractional $p$-Laplacian on hyperbolic spaces. We first find the Poisson formula and relate the heat semigroup to the extension problem \eqref{eq:ext-prob}.

\begin{lemma} \label{lem:Poisson}
Let $n \geq 2$, $s \in (0,1)$, and $p > 1$. If $f \in C_{b}(\mathbb{H}^{n})$, then
\begin{equation*}
E_{s, p}[f](x, y) := \int_{\mathbb{H}^{n}} P(d(x,\xi), y) f(\xi) \,\mathrm{d}\xi
\end{equation*}
is a solution of the extension problem \eqref{eq:ext-prob}, where $P(\rho, y)$ is the Poisson kernel given by
\begin{equation*}
P(\rho, y) = C_{4} \, y^{sp} \left( \frac{-\partial_\rho}{\sinh \rho} \right)^{\frac{n-1}{2}} \widetilde{K}_{\frac{1+sp}{2},\frac{n-1}{2}} \left( \sqrt{\rho^{2}+y^{2}} \right)
\end{equation*}
when $n \geq 3$ odd and
\begin{equation*}
P(\rho, y) = C_{4} \, y^{sp} \int_{\rho}^{\infty} \frac{\sinh r}{\sqrt{\pi}\sqrt{\cosh r - \cosh \rho}} \left( \frac{-\partial_{r}}{\sinh r} \right)^{\frac{n}{2}} \widetilde{K}_{\frac{1+sp}{2},\frac{n-1}{2}}\left( \sqrt{r^{2}+y^{2}} \right)\, \mathrm{d}r
\end{equation*}
when $n \geq 2$ even; here, $\widetilde{K}_{\nu,a}$ is the function given in \eqref{eq:K-scr} and
\begin{equation*}
C_{4} = \frac{1}{2^{\frac{n-3}{2}}\pi^{\frac{n}{2}} \Gamma(\frac{sp}{2})} \left( \frac{n-1}{4} \right)^{\frac{1+sp}{4}}.
\end{equation*}
Moreover, $E_{s, p}[f]$ has an alternative representation
\begin{equation} \label{eq:U}
E_{s, p}[f](x, y) = \frac{y^{sp}}{2^{sp}\Gamma(\frac{sp}{2})} \int_{0}^{\infty} e^{t\Delta_{\mathbb{H}^{n}}}[f](x) e^{-\frac{y^2}{4t}} \frac{\mathrm{d}t}{t^{1+\frac{sp}{2}}}.
\end{equation}
\end{lemma}

\begin{proof}
For each $x \in \mathbb{H}^{n}$ and $y > 0$, we define $V(x,y)$ by the function given in the right-hand side of \eqref{eq:U}. Then, we have
\begin{equation*}
V(x,y) = \frac{y^{sp}}{2^{sp}\Gamma(\frac{sp}{2})} \int_{0}^{\infty} \int_{\mathbb{H}^{n}} h(t, \rho) f(\xi) \,\mathrm{d}\xi \, e^{-\frac{y^{2}}{4t}} \frac{\mathrm{d}t}{t^{1+\frac{sp}{2}}},
\end{equation*}
where $\rho = d(x, \xi)$. Recalling the expression \eqref{eq:hk-odd} for the heat kernel $h(t, \rho)$ and using \eqref{eq:K-integral}, we obtain
\begin{equation*}
\begin{split}
V(x,y)
&= \frac{y^{sp}}{2^{sp}\Gamma(\frac{sp}{2})} \int_{0}^{\infty} \int_{\mathbb{H}^{n}} \frac{1}{(2\pi)^{m}} \frac{1}{(4\pi t)^{1/2}} \left( \left( \frac{-\partial_{\rho}}{\sinh \rho} \right)^{m} e^{-m^{2} t-\frac{\rho^{2}+y^{2}}{4t}} \right) f(\xi) \,\mathrm{d}\xi \, \frac{\mathrm{d}t}{t^{1+sp/2}} \\
&= \int_{\mathbb{H}^{n}} \frac{y^{sp}}{2^{sp}\Gamma(\frac{sp}{2})} \frac{1}{(2\pi)^{m}} \frac{1}{(4\pi)^{1/2}} \left( \frac{-\partial_{\rho}}{\sinh \rho} \right)^{m} \left( \int_{0}^{\infty} e^{-m^{2} t-\frac{\rho^{2}+y^{2}}{4t}} t^{-\frac{3+sp}{2}} \,\mathrm{d}t \right) f(\xi) \,\mathrm{d}\xi \\
&= \int_{\mathbb{H}^{n}} P(d(x,\xi), y) f(\xi) \,\mathrm{d}\xi
\end{split}
\end{equation*}
when $n=2m+1$ is odd. If $n=2m$ is even, then we use \eqref{eq:hk-even} instead of \eqref{eq:hk-odd} to have
\begin{equation*}
\begin{split}
V(x,y)
&= \frac{y^{sp}}{2^{sp}\Gamma(\frac{sp}{2})} \int_{0}^{\infty} \int_{\mathbb{H}^{n}} \frac{t^{-\frac{5+sp}{2}} e^{-\frac{(2m-1)^2}{4}t}}{2(2\pi)^{m+1/2}} \left( \frac{-\partial_{\rho}}{\sinh \rho} \right)^{m-1} \int_{\rho}^{\infty} \frac{r e^{-\frac{r^{2}+y^{2}}{4t}} \,\mathrm{d}r}{\sqrt{\cosh r-\cosh \rho}} \, f(\xi) \,\mathrm{d}\xi \, \mathrm{d}t \\
&= \frac{y^{sp}}{2^{sp}\Gamma(\frac{sp}{2})} \int_{\mathbb{H}^{n}} \left( \frac{-\partial_{\rho}}{\sinh \rho} \right)^{m-1} \int_{\rho}^{\infty} \int_{0}^{\infty} \frac{t^{-\frac{5+sp}{2}} e^{-\frac{(2m-1)^2}{4}t}}{2(2\pi)^{m+1/2}} \frac{r e^{-\frac{r^{2}+y^{2}}{4t}} \,\mathrm{d}t}{\sqrt{\cosh r-\cosh \rho}} \, \mathrm{d}r \,f(\xi) \, \mathrm{d}\xi.
\end{split}
\end{equation*}
Moreover, using \eqref{eq:K-integral} we compute
\begin{equation*}
\begin{split}
\int_{0}^{\infty} e^{-\frac{(2m-1)^2}{4} t-\frac{r^{2}+y^{2}}{4t}} t^{-\frac{5+sp}{2}} \, \mathrm{d}t
&= 2 (2m-1)^{\frac{3+sp}{2}} \widetilde{K}_{\frac{3+sp}{2}, \frac{2m-1}{2}}\left( \sqrt{r^{2}+y^{2}} \right) \\
&= 4(2m-1)^{\frac{1+sp}{2}} \left( \frac{-\partial_{r}}{r} \right) \widetilde{K}_{\frac{1+sp}{2}, \frac{2m-1}{2}} \left( \sqrt{r^{2}+y^{2}} \right).
\end{split}
\end{equation*}
Therefore, we obtain
\begin{equation*}
\begin{split}
V(x,y)
&= C_{4} \, y^{sp} \int_{\mathbb{H}^{n}} \left( \frac{-\partial_{\rho}}{\sinh \rho} \right)^{m-1} \int_{\rho}^{\infty} \frac{(-\partial_{r}) \widetilde{K}_{\frac{1+sp}{2}, \frac{2m-1}{2}}\left(\sqrt{r^{2}+y^{2}} \right)}{\sqrt{\pi}\sqrt{\cosh r-\cosh \rho}} \, \mathrm{d}r \,f(\xi) \, \mathrm{d}\xi \\
&= \int_{\mathbb{H}^{n}} P(d(x, \xi), y) f(\xi) \,\mathrm{d}\xi
\end{split}
\end{equation*}
in the even-dimensional case as well, where we used \Cref{lem:recurrence} in the last equality.

It only remains to prove that $V$ solves the extension problem \eqref{eq:ext-prob}. Since the heat semigroup $e^{t\Delta_{\mathbb{H}^{n}}}[f]$ solves \eqref{eq:Cauchy}, $V$ satisfies
\begin{equation*}
\Delta_{x} V = \frac{y^{sp}}{2^{sp} \Gamma(\frac{sp}{2})} \int_{0}^{\infty} \partial_{t} \left( e^{t\Delta_{\mathbb{H}^{n}}}[f](x) \right) e^{-\frac{y^2}{4t}} t^{-1-\frac{sp}{2}} \,\mathrm{d}t.
\end{equation*}
Using the integration by parts and the fact that $|e^{t\Delta_{\mathbb{H}^{n}}}[f](x)| \leq \|f\|_{L^{\infty}}$, we obtain
\begin{equation*}
\begin{split}
\Delta_{x} V
&= \frac{y^{sp}}{2^{sp} \Gamma(\frac{sp}{2})} \left( \left[ e^{t\Delta_{\mathbb{H}^{n}}}[f](x) e^{-\frac{y^{2}}{4t}}t^{-1-\frac{sp}{2}} \right]_{0}^{\infty} - \int_{0}^{\infty} e^{t\Delta_{\mathbb{H}^{n}}}[f](x) \partial_{t} \left( e^{-\frac{y^{2}}{4t}} t^{-1-\frac{sp}{2}} \right) \mathrm{d}t \right) \\
&= - \frac{y^{sp}}{2^{sp} \Gamma(\frac{sp}{2})} \int_{0}^{\infty} e^{t\Delta_{\mathbb{H}^{n}}}[f](x) \left(\frac{y^{2}}{4} e^{-\frac{y^{2}}{4t}} t^{-3-\frac{sp}{2}} - \left(1+\frac{sp}{2} \right) e^{-\frac{y^{2}}{4t}} t^{-2-\frac{sp}{2}} \right) \mathrm{d}t.
\end{split}
\end{equation*}
Since
\begin{equation*}
\begin{split}
V_{y}
&= \frac{sp y^{sp-1}}{2^{sp} \Gamma(\frac{sp}{2})} \int_{0}^{\infty} e^{t\Delta_{\mathbb{H}^{n}}}[f](x) e^{-\frac{y^{2}}{4t}} t^{-1-\frac{sp}{2}} \,\mathrm{d}t \\
&\quad - \frac{y^{sp+1}}{2^{sp+1} \Gamma(\frac{sp}{2})} \int_{0}^{\infty} e^{t\Delta_{\mathbb{H}^{n}}}[f](x) e^{-\frac{y^{2}}{4t}} t^{-2-\frac{sp}{2}} \,\mathrm{d}t
\end{split}
\end{equation*}
and
\begin{equation*}
\begin{split}
V_{yy}
&= \frac{sp(sp-1) y^{sp-2}}{2^{sp} \Gamma(\frac{sp}{2})} \int_{0}^{\infty} e^{t\Delta_{\mathbb{H}^{n}}}[f](x) e^{-\frac{y^{2}}{4t}} t^{-1-\frac{sp}{2}} \,\mathrm{d}t \\
&\quad - \frac{2sp+1}{2^{sp+1}\Gamma(\frac{sp}{2})} y^{sp} \int_{0}^{\infty} e^{t\Delta_{\mathbb{H}^{n}}}[f](x) e^{-\frac{y^{2}}{4t}} t^{-2-\frac{sp}{2}} \,\mathrm{d}t \\
&\quad + \frac{y^{sp+2}}{2^{sp+2}\Gamma(\frac{sp}{2})} \int_{0}^{\infty} e^{t\Delta_{\mathbb{H}^{n}}}[f](x) e^{-\frac{y^{2}}{4t}} t^{-3-\frac{sp}{2}} \,\mathrm{d}t,
\end{split}
\end{equation*}
one can easily compute
\begin{equation*}
\Delta_{x} V(x,y) + \frac{1-sp}{y} V_{y}(x,y) + V_{yy}(x,y) = 0.
\end{equation*}
Finally, we prove $V(x,0) = f(x)$. Indeed, since the heat kernel $h(t, \rho)$ satisfies
\begin{equation*}
\int_{\mathbb{H}^{n}} h(t, d(x, \xi)) \,\mathrm{d}\xi = 1,
\end{equation*}
we obtain
\begin{equation*}
\begin{split}
\int_{\mathbb{H}^{n}} P(d(x,\xi), y) \,\mathrm{d}\xi
&= \frac{y^{sp}}{2^{sp}\Gamma(\frac{sp}{2})} \int_{0}^{\infty} \left( \int_{\mathbb{H}^{n}} h(t, d(x,\xi)) \,\mathrm{d}\xi \right) e^{-\frac{y^{2}}{4t}} \frac{\mathrm{d}t}{t^{1+\frac{sp}{2}}} \\
&= \frac{y^{sp}}{2^{sp}\Gamma(\frac{sp}{2})} \int_{0}^{\infty} e^{-\frac{y^{2}}{4t}} \frac{\mathrm{d}t}{t^{1+\frac{sp}{2}}} =1.
\end{split}
\end{equation*}
Now, fix $\varepsilon>0$ and find $\delta>0$ such that $|f(\xi)-f(x)|<\varepsilon$ whenever $d(x, \xi) < \delta$. Then
\begin{align*}
|V(x, y)-f(x)|
&\leq \int_{d(x, \xi)<\delta} P(d(x, \xi), y) |f(\xi)-f(x)| \,\mathrm{d}\xi + 2\|f\|_{L^\infty} \int_{d(x, \xi)<\delta} P(d(x, \xi), y) \,\mathrm{d}\xi \\
&\leq \varepsilon + 2\|f\|_{L^\infty} \int_{d(x, \xi)<\delta} P(d(x, \xi), y) \,\mathrm{d}\xi.
\end{align*}
Since $P(\rho, y) \to 0$ uniformly for $\rho \geq \delta$ as $y \searrow 0$, we obtain
\begin{equation*}
\limsup_{y \searrow 0} |V(x, y)-f(x)| \leq \varepsilon.
\end{equation*}
Since $\varepsilon>0$ was arbitrary, this shows that $V(x, 0)=f(x)$. This concludes that $V$ solves the extension problem \eqref{eq:ext-prob}.
\end{proof}

Let us now prove \Cref{thm:extension} by using the Poisson formula in \Cref{lem:Poisson}.

\begin{proof} [Proof of \Cref{thm:extension}]
Since $c_{n, s, p}C_{1} = C_{3}C_{4}$, by \Cref{lem:Poisson} it is enough to show
\begin{equation*}
\left| \, \mathrm{P.V.}\int_{\mathbb{H}^{n}} \Phi_{p}(u(x)-u(\xi)) K(d(x, \xi)) \,\mathrm{d}\xi \right| \to 0
\end{equation*}
as $y \searrow 0$, where
\begin{equation*}
K(\rho) = \left( \frac{-\partial_{\rho}}{\sinh \rho} \right)^{\frac{n-1}{2}} \left( \widetilde{K}_{\frac{1+sp}{2}, \frac{n-1}{2}}(\rho) - \widetilde{K}_{\frac{1+sp}{2}, \frac{n-1}{2}}\left( \sqrt{\rho^{2}+y^{2}} \right) \right)
\end{equation*}
when $n$ is odd and
\begin{equation*}
K(\rho) = \int_{\rho}^{\infty} \frac{\sinh r}{\sqrt{\pi}\sqrt{\cosh r - \cosh \rho}} \left( \frac{-\partial_{r}}{\sinh r} \right)^{\frac{n}{2}} \left( \widetilde{K}_{\frac{1+sp}{2},\frac{n-1}{2}}(r) - \widetilde{K}_{\frac{1+sp}{2}, \frac{n-1}{2}}\left( \sqrt{r^{2}+y^{2}} \right) \right) \mathrm{d}r
\end{equation*}
when $n$ is even.

We first split the integral as follows:
\begin{equation*}
\begin{split}
&\left| \int_{\mathbb{H}^{n}} \Phi_{p}(u(x)-u(\xi)) K(d(x, \xi)) \,\mathrm{d}\xi \right| \\
&\leq \left| \int_{d(x,\xi) \leq 1} \Phi_{p}(u(x)-u(\xi)) K(d(x, \xi)) \,\mathrm{d}\xi \right| + \left| \int_{d(x,\xi) > 1} \Phi_p(u(x)-u(\xi)) K(d(x, \xi)) \,\mathrm{d}\xi \right| \\
&= J_{1}+J_{2}.
\end{split}
\end{equation*}
For $J_{1}$, we apply \Cref{lem:appendix} to $K\chi_{\{d(x,\xi)\leq 1\}}$ to obtain
\begin{equation*}
J_{1} \lesssim \int_{d(x, \xi) \leq 1} d(x, \xi)^{\alpha} |K(d(x, \xi))| \,\mathrm{d}\xi,
\end{equation*}
where $\alpha = 2p-2$ when $p \in (\frac{2}{2-s},2)$ and $\alpha = p$ when $p \in (1, \frac{2}{2-s}] \cup [2, \infty)$. For $J_{2}$, we have
\begin{equation*}
J_{2} \lesssim \|u\|_{L^{\infty}(\mathbb{H}^{n})}^{p-1} \int_{d(x, \xi) > 1} |K(d(x, \xi))| \,\mathrm{d}\xi.
\end{equation*}
Therefore, the dominated convergence theorem concludes that $J_{1} + J_{2} \to 0$ as $y \searrow 0$.
\end{proof}


\section{Pointwise convergence} \label{sec:convergence}


This section is devoted to the proof of \Cref{thm:convergence}. As mentioned in \Cref{sec:introduction}, the limits of the integrals
\begin{equation} \label{eq:integral1}
c_{n,s,p} \int_{R}^{\infty} \mathcal{K}_{n,s,p}(\rho)\sinh^{n-1}\rho \,\mathrm{d}\rho, \quad c_{n,s,p} \int_{0}^{R} \rho^{p} \mathcal{K}_{n,s,p}(\rho)\sinh^{n-1}\rho \,\mathrm{d}\rho,
\end{equation}
and
\begin{equation} \label{eq:integral2}
c_{n,s,p} \int_{0}^{R} \rho^{\beta+p} \mathcal{K}_{n,s,p} \sinh^{n-1}\rho \,\mathrm{d}\rho, \quad \beta > 0,
\end{equation}
as $s \to 1^{-}$, play a key role in the proof of \Cref{thm:convergence}. Here, we recall that the constant $c_{n, s, p}$ is given in \eqref{eq-cnsp}.

In the following series of lemmas, we compute limits of the integrals in \eqref{eq:integral1} and \eqref{eq:integral2}.

\begin{lemma} \label{lem:infty}
Let $n\geq 2$ and $p > 1$. For any $R > 0$,
\begin{equation*}
\lim_{s \nearrow 1} c_{n, s, p} \int_{R}^{\infty} \mathcal{K}_{n,s,p}(\rho)\sinh^{n-1} \rho \,\mathrm{d}\rho = 0.
\end{equation*}
\end{lemma}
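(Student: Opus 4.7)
The goal is to rewrite $I_s(R) := c_{n,s,p}\int_R^\infty \mathcal{K}_{n,s,p}(\rho)\sinh^{n-1}\rho\,\mathrm{d}\rho$ in a form where the $s\to 1^-$ behaviour is manifest. The plan is to revisit the Fubini step in the proof of \Cref{thm:kernel}: that argument uses only that the test function is bounded and supported in $\{d(x,\cdot)>\varepsilon\}$, so it applies equally well with $g = \chi_{\{d(x,\cdot)>R\}}$ in place of $g_\varepsilon$ (taking $\varepsilon=R$), yielding
\[
c_{n,s,p}\int_{\mathbb{H}^n} g(\xi)\,\mathcal{K}_{n,s,p}(d(x,\xi))\,\mathrm{d}\xi \;=\; C_1\int_0^\infty e^{t\Delta_{\mathbb{H}^n}}[g](x)\,\frac{\mathrm{d}t}{t^{1+\frac{sp}{2}}}.
\]
Using $e^{t\Delta_{\mathbb{H}^n}}[g](x) = \int_{d(x,\xi)>R} p(t,d(x,\xi))\,\mathrm{d}\xi =: M(t) \in [0,1]$ and passing to polar coordinates on the left gives
\[
|\mathbb{S}^{n-1}|\,I_s(R) \;=\; C_1\int_0^\infty M(t)\,t^{-1-\frac{sp}{2}}\,\mathrm{d}t.
\]

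\textbf{Uniform bound on the $t$-integral.} I would split the integral at $t=1$. On $(1,\infty)$, use $M(t)\le 1$ and $\int_1^\infty t^{-1-sp/2}\mathrm{d}t = 2/(sp)$, which stays bounded as $s\to 1^-$ (say by $4/p$ once $s\ge \tfrac12$). On $(0,1]$ the key input is the Gaussian small-time tail bound $M(t)\lesssim e^{-cR^2/t}$, which follows from the standard upper estimate for the hyperbolic heat kernel and can be extracted directly from the explicit expressions \eqref{eq:hk-odd}--\eqref{eq:hk-even}: repeated action of $-\partial_\rho/\sinh\rho$ on $e^{-m^2t-\rho^2/(4t)}$ produces only polynomial factors in $\rho/t$ and $\coth\rho$, which are easily absorbed into the Gaussian once integrated against $\sinh^{n-1}\rho$ on $\{\rho>R\}$. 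Since $t^{-1-sp/2}\le t^{-1-p/2}$ on $(0,1]$ for $s\le 1$, the full integrand is dominated by an $s$-independent integrable function on $(0,1]$. Consequently $\int_0^\infty M(t)\,t^{-1-sp/2}\,\mathrm{d}t \le M_0(R,n,p)$ uniformly for $s\in[\tfrac12,1)$.

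\textbf{Conclusion.} Since $\Gamma$ has a simple pole at $-1$, $|\Gamma(-s)|\sim (1-s)^{-1}$ as $s\to 1^-$, so the prefactor
$C_1 = \frac{p}{2}\frac{\sqrt{\pi}/2}{\Gamma((p+1)/2)}\frac{2^{s(2-p)}}{|\Gamma(-s)|}$
vanishes linearly in $(1-s)$ while the other factors stay bounded. Combined with the previous step,
\[
I_s(R)\;\le\; \frac{C_1}{|\mathbb{S}^{n-1}|}\,M_0(R,n,p)\;\longrightarrow\;0\qquad\text{as }s\to 1^-,
\]
which is the claim. The only nontrivial piece of the argument is the Gaussian small-time control of $M(t)$; everything else reduces to Fubini, an elementary splitting, and the pole of $\Gamma$ at $-1$. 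I expect this small-time estimate to be the main (though standard) technical obstacle, and I would handle it by direct inspection of \eqref{eq:hk-odd}--\eqref{eq:hk-even} rather than by invoking general heat-kernel machinery.
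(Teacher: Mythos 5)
Your proposal is correct, and it takes a genuinely different and noticeably shorter route than the paper's own proof. The paper works directly with the explicit kernel $\mathcal{K}_{n,s,p}$: it proves stronger statements (\eqref{eq:infty-odd}, \eqref{eq:infty-even}) by a double induction on the number of applications of $-\partial_\rho/\sinh\rho$, invoking \Cref{lem:positivity} for sign control, \Cref{lem:recurrence} to peel off one derivative, Bessel asymptotics \eqref{eq:K-asymptotic}, the bound \eqref{eq:coth}, and separate bookkeeping for odd and even $n$. You instead undo the derivation of $\mathcal{K}_{n,s,p}$: applying the Fubini computation from the proof of \Cref{thm:kernel} to $g=\chi_{\{d(x,\cdot)>R\}}$ identifies the quantity with $C_1\int_0^\infty M(t)\,t^{-1-sp/2}\,\mathrm{d}t$, where $0\le M(t)\le 1$ by the positivity and mass-$1$ normalization of the heat kernel (both already cited in the paper). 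Uniform boundedness of the $t$-integral in $s\in[\tfrac12,1)$ then follows from the crude split at $t=1$ together with a small-time Gaussian tail bound $M(t)\lesssim e^{-cR^2/t}$, and the vanishing is supplied entirely by the prefactor $C_1\sim(1-s)$ coming from the pole of $\Gamma$ at $-1$. This is cleaner: it makes transparent that $\int_R^\infty \mathcal{K}_{n,s,p}(\rho)\sinh^{n-1}\rho\,\mathrm{d}\rho$ is in fact uniformly bounded as $s\to1^-$, so the lemma is purely a statement about the normalizing constant, a structural point the paper's kernel-side estimates obscure. The only step you leave to routine verification is the Gaussian small-time bound for hyperbolic heat kernels; this is indeed standard (e.g.\ Davies--Mandouvalos, or direct inspection of \eqref{eq:hk-odd}--\eqref{eq:hk-even} as you suggest), and it is needed only in the weak form $\int_0^1 M(t)\,t^{-1-p/2}\,\mathrm{d}t<\infty$, so any polynomially strong decay already suffices.
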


\begin{proof}
Let us first consider the case $n=2m+1$ with $m \geq 1$. Since $\lim_{s \nearrow 1} (1-s)|\Gamma(-s)| = 1$, we have $c_{n, s, p}C_{1} \leq C(1-s)$ for some $C=C(n, p) > 0$. By using \Cref{cor:nonnegativity}, we have
\begin{equation} \label{eq:infty}
\begin{split}
0
&\leq c_{n, s, p} \int_{R}^{\infty} \mathcal{K}_{n,s,p}(\rho)\sinh^{n-1} \rho \,\mathrm{d}\rho \\
&\lesssim (1-s) \int_{R}^{\infty} \sinh^{2m} \rho \left( \frac{-\partial_{\rho}}{\sinh \rho} \right)^{m} \widetilde{K}_{\frac{1+sp}{2},m}(\rho) \, \mathrm{d}\rho.
\end{split}
\end{equation}
Thus, it is enough to show that the right-hand side of \eqref{eq:infty} converges to zero as $s \to 1^{-}$. We actually prove the following stronger statement:
\begin{equation} \label{eq:infty-odd}
\lim_{s \nearrow 1} (1-s) \int_{R}^{\infty} \sinh^{m+a}\rho \left( \frac{-\partial_{\rho}}{\sinh \rho} \right)^{m} \widetilde{K}_{\frac{1+sp}{2},a}(\rho) \, \mathrm{d}\rho = 0 \quad\text{for each } a > 0.
\end{equation}

We use the induction on $m$. When $m=1$, using \eqref{eq:scrK-recurrence} and the fact that $K_{\nu}$ is increasing with respect to $\nu > 0$, we have
\begin{equation*}
\begin{split}
\int_{R}^{\infty} \sinh^{1+a}\rho \left( \frac{-\partial_{\rho}}{\sinh \rho} \right) \widetilde{K}_{\frac{1+sp}{2},a}(\rho)\, \mathrm{d}\rho
&= a \int_{R}^{\infty} (\sinh^{a} \rho) \rho^{-\frac{1+sp}{2}} K_{\frac{3+sp}{2}}(a\rho) \, \mathrm{d}\rho \\
&\leq a \int_{R}^{\infty} (\sinh^{a} \rho) \rho^{-\frac{1+sp}{2}} K_{\frac{3+p}{2}}(a\rho) \, \mathrm{d}\rho.
\end{split}
\end{equation*}
By \eqref{eq:K-asymptotic}, there exists $M = M(p) > 1$ such that
\begin{equation} \label{eq:M}
K_{\frac{3+p}{2}}(\rho) \leq \sqrt{\frac{\pi}{\rho}} e^{-\rho}\quad\text{for } \rho > M.
\end{equation}
The inequalities $\rho^{-\frac{1+sp}{2}} \leq \max\lbrace \rho^{-\frac{1}{2}}, \rho^{-\frac{1+p}{2}} \rbrace$ and $\sinh \rho < e^{\rho}$, together with \eqref{eq:M}, yield
\begin{equation*}
\begin{split}
&\int_{R}^{\infty} (\sinh^{a} \rho) \rho^{-\frac{1+sp}{2}} K_{\frac{3+p}{2}}(a\rho) \, \mathrm{d}\rho \\
&\leq \int_{R}^{M/a} (\sinh^{a} \rho) \max\left\lbrace \rho^{-\frac{1}{2}}, \rho^{-\frac{1+p}{2}} \right\rbrace K_{\frac{3+p}{2}}(a\rho) \, \mathrm{d}\rho + \sqrt{\frac{\pi}{a}} \int_{M/a}^{\infty} \rho^{-1-\frac{sp}{2}} \, \mathrm{d}\rho.
\end{split}
\end{equation*}
Note that the first integral in the right-hand side of the inequality above is a constant depending on $a$, $p$, and $R$ only. For the second integral, we estimate
\begin{equation*}
\sqrt{\frac{\pi}{a}} \int_{M/a}^{\infty} \rho^{-1-\frac{sp}{2}} \, \mathrm{d}\rho = \frac{2}{sp} \sqrt{\frac{\pi}{a}} \left( \frac{a}{M} \right)^{\frac{sp}{2}} \leq \frac{2}{sp} \sqrt{\frac{\pi}{a}} \max \left\lbrace \left( \frac{a}{M} \right)^{\frac{p}{2}}, 1 \right\rbrace.
\end{equation*}
Thus, we arrive at
\begin{equation*}
\lim_{s \nearrow 1} (1-s) \int_{R}^{\infty} \sinh^{1+a}\rho \left( \frac{-\partial_{\rho}}{\sinh \rho} \right) \widetilde{K}_{\frac{1+sp}{2},a}(\rho)\, \mathrm{d}\rho = 0,
\end{equation*}
which proves \eqref{eq:infty-odd} for $m=1$.

Assume now that \eqref{eq:infty-odd} is true for $m$ and prove it for $m+1$. Using integration by parts, we have
\begin{align*}
&\int_{R}^{\infty} \sinh^{m+1+a}\rho \left( \frac{-\partial_{\rho}}{\sinh \rho} \right)^{m+1} \widetilde{K}_{\frac{1+sp}{2},a}(\rho) \, \mathrm{d}\rho \\
&= (m+a) \int_{R}^{\infty} \sinh^{m+a-1}\rho \cosh \rho \left( \frac{-\partial_{\rho}}{\sinh \rho} \right)^{m} \widetilde{K}_{\frac{1+sp}{2},a}(\rho) \, \mathrm{d}\rho \\
&\quad - \left[ \sinh^{m+a}\rho \left( \frac{-\partial_\rho}{\sinh \rho} \right)^m \widetilde{K}_{\frac{1+sp}{2},a}(\rho) \right]_R^{\infty}.
\end{align*}
Note that by \Cref{lem-asymptotic} and \eqref{eq:K-asymptotic},
\begin{equation*}
\sinh^{m+a}\rho \left( \frac{-\partial_\rho}{\sinh \rho} \right)^m \widetilde{K}_{\frac{1+sp}{2},a}(\rho) \sim \rho^{-\frac{1+sp}{2}} (\sinh\rho)^{a} K_{m+\frac{1+sp}{2}}(a\rho) \sim \rho^{-1-sp/2}
\end{equation*}
as $\rho \to \infty$. Thus, taking the limit $\lim_{s \nearrow 1}(1-s)$ yields
\begin{equation*}
\begin{split}
&\lim_{s \nearrow 1} (1-s) \int_{R}^{\infty} \sinh^{m+1+a}\rho \left( \frac{-\partial_{\rho}}{\sinh \rho} \right)^{m+1} \widetilde{K}_{\frac{1+sp}{2},a}(\rho) \, \mathrm{d}\rho \\
&= \lim_{s \nearrow 1} (1-s) (m+a) \int_{R}^{\infty} \sinh^{m+a-1}\rho \cosh \rho \left( \frac{-\partial_{\rho}}{\sinh \rho} \right)^{m} \widetilde{K}_{\frac{1+sp}{2},a}(\rho) \, \mathrm{d}\rho.
\end{split}
\end{equation*}
Therefore, by an inequality
\begin{equation} \label{eq:coth}
\cosh \rho \leq \coth R \sinh \rho \quad\text{for } \rho \geq R,
\end{equation}
\Cref{lem:positivity}, and the induction hypothesis, we conclude
\begin{equation*}
\begin{split}
&\lim_{s \nearrow 1} (1-s) \int_{R}^{\infty} \sinh^{m+1+a}\rho \left( \frac{-\partial_{\rho}}{\sinh \rho} \right)^{m+1} \widetilde{K}_{\frac{1+sp}{2},a}(\rho) \, \mathrm{d}\rho \\
&\leq (m+a) (\coth R) \lim_{s \nearrow 1} (1-s) \int_{R}^{\infty} \sinh^{m+a}\rho \left( \frac{-\partial_{\rho}}{\sinh \rho} \right)^{m} \widetilde{K}_{\frac{1+sp}{2},a}(\rho) \, \mathrm{d}\rho = 0.
\end{split}
\end{equation*}
This finishes the proof of the lemma in the odd-dimensional case.

Let us next consider the even-dimensional cases $n=2m$ with $m \geq 1$. Similarly as in the odd-dimensional case, since
\begin{equation*}
\begin{split}
0
&\leq c_{n, s, p} \int_{R}^{\infty} \mathcal{K}_{n,s,p}(\rho)\sinh^{n-1} \rho \,\mathrm{d}\rho \\
&\lesssim (1-s) \int_{R}^{\infty} \sinh^{2m-1} \rho \int_{\rho}^{\infty} \frac{\sinh r}{\sqrt{\cosh r - \cosh \rho}} \left( \frac{-\partial_{r}}{\sinh r} \right)^{m} \widetilde{K}_{\frac{1+sp}{2}, \frac{2m-1}{2}}(r) \,\mathrm{d}r \,\mathrm{d}\rho,
\end{split}
\end{equation*}
the desired result will follow once we prove the following:
\begin{equation} \label{eq:infty-even}
\begin{split}
\lim_{s \nearrow 1} (1-s) \int_{R}^{\infty} \sinh^{\frac{2m-1}{2}+a}\rho \int_{\rho}^{\infty} \frac{\sinh r}{\sqrt{\cosh r - \cosh \rho}} \left( \frac{-\partial_r}{\sinh r} \right)^{m} \widetilde{K}_{\frac{1+sp}{2},a}(r) \, \mathrm{d}r \,\mathrm{d}\rho = 0 \\
\quad\text{for each } a \geq 1/2.
\end{split}
\end{equation}
If $m=1$, then
\begin{equation*}
\begin{split}
&\int_{R}^{\infty} \sinh^{\frac{1}{2}+a}\rho \int_{\rho}^{\infty} \frac{\sinh r}{\sqrt{\cosh r - \cosh \rho}} \left( \frac{-\partial_{r}}{\sinh r} \right) \widetilde{K}_{\frac{1+sp}{2},a}(r) \,\mathrm{d}r \,\mathrm{d}\rho \\
&\leq a \int_{R}^{M/a} \sinh^{\frac{1}{2}+a}\rho \int_{\rho}^{\infty} \frac{1}{\sqrt{\cosh r - \cosh \rho}} r^{-\frac{1+sp}{2}} K_{\frac{3+p}{2}}(ar) \,\mathrm{d}r \,\mathrm{d}\rho \\
&\quad + a \int_{M/a}^{\infty} \sinh^{\frac{1}{2}+a}\rho \int_{\rho}^{\infty} \frac{1}{\sqrt{\cosh r - \cosh \rho}} r^{-\frac{1+sp}{2}} K_{\frac{3+p}{2}}(ar) \,\mathrm{d}r \,\mathrm{d}\rho =: J_{1} + J_{2}.
\end{split}
\end{equation*}
For $J_{2}$, we use \eqref{eq:M} to obtain
\begin{equation*}
J_{2} \leq \sqrt{\pi a} \int_{M/a}^{\infty} \frac{\sinh^{\frac{1}{2}+a}\rho}{\rho^{1+\frac{sp}{2}} e^{a\rho}} \int_{\rho}^{\infty} \frac{1}{\sqrt{\cosh r - \cosh \rho}} \,\mathrm{d}r \,\mathrm{d}\rho.
\end{equation*}
Since
\begin{equation*}
\begin{split}
\int_{\rho}^{\infty} \frac{1}{\sqrt{\cosh r - \cosh \rho}} \,\mathrm{d}r 
&= \frac{1}{\sqrt{2}} \int_{\rho}^{\infty} \frac{1}{\sqrt{\sinh\frac{r+\rho}{2} \sinh \frac{r-\rho}{2}}} \,\mathrm{d}r \\
&\leq \frac{1}{\sqrt{2\sinh \rho}} \int_{\rho}^{\infty} \frac{1}{\sqrt{\sinh\frac{r-\rho}{2}}} \,\mathrm{d}r \\
&= \frac{1}{\sqrt{2\sinh \rho}} \int_{0}^{\infty} \frac{1}{\sqrt{\sinh\frac{r}{2}}} \,\mathrm{d}r = \frac{\Gamma(1/4)}{\Gamma(3/4)}\sqrt{\frac{\pi}{\sinh \rho}}
\end{split}
\end{equation*}
and $\sinh^{a} \rho \leq e^{a\rho}$, we have
\begin{equation*}
J_{2} \leq \frac{\Gamma(1/4)}{\Gamma(3/4)} \pi \sqrt{a} \int_{M/a}^{\infty} \rho^{-1-\frac{sp}{2}} \,\mathrm{d}\rho = \frac{\Gamma(1/4)}{\Gamma(3/4)} \frac{\pi \sqrt{a}}{sp} \left( \frac{a}{M} \right)^{\frac{sp}{2}}.
\end{equation*}
On the other hand, for $J_{1}$ we observe
\begin{equation*}
J_{1} \leq a \int_{R}^{M/a} \sinh^{\frac{1}{2}+a}\rho \int_{\rho}^{\infty} \frac{\max \lbrace r^{-\frac{1+p}{2}}, r^{-\frac{1}{2}} \rbrace}{\sqrt{\cosh r - \cosh \rho}} K_{\frac{3+p}{2}}(ar) \,\mathrm{d}r \,\mathrm{d}\rho.
\end{equation*}
Since the inner integral is continuous and integrable on $[R, M/a]$, $J_{1}$ is controlled by some constant $C = C(a, p, R) > 0$. Therefore, we conclude $\lim_{s \nearrow 1} (1-s) (J_{1} + J_{2}) = 0$, which proves \eqref{eq:infty-even} for $m=1$.

Finally, let us assume that \eqref{eq:infty-even} holds for $m$ and prove it for $m+1$. By \Cref{lem:recurrence}, we have
\begin{equation*}
\begin{split}
&\int_{R}^{\infty} \sinh^{\frac{2m+1}{2}+a}\rho \int_{\rho}^{\infty} \frac{\sinh r}{\sqrt{\cosh r - \cosh \rho}} \left( \frac{-\partial_{r}}{\sinh r} \right)^{m+1} \widetilde{K}_{\frac{1+sp}{2},a}(r) \,\mathrm{d}r \,\mathrm{d}\rho \\
&= \int_{R}^{\infty} \sinh^{\frac{2m+1}{2}+a}\rho \left( \frac{-\partial_{\rho}}{\sinh \rho} \right) \int_{\rho}^{\infty} \frac{\sinh r}{\sqrt{\cosh r - \cosh \rho}} \left( \frac{-\partial_{r}}{\sinh r} \right)^{m} \widetilde{K}_{\frac{1+sp}{2},a}(r) \,\mathrm{d}r \,\mathrm{d}\rho.
\end{split}
\end{equation*}
Using integration by parts, \eqref{eq:coth}, and \Cref{lem:positivity} and then taking $\lim_{s \nearrow 1}(1-s)$ as in the odd-dimensional case, we deduce
\begin{equation*}
\begin{split}
&\lim_{s \nearrow 1} (1-s) \int_{R}^{\infty} \sinh^{\frac{2m+1}{2}+a}\rho \int_{\rho}^{\infty} \frac{\sinh r}{\sqrt{\cosh r - \cosh \rho}} \left( \frac{-\partial_{r}}{\sinh r} \right)^{m+1} \widetilde{K}_{\frac{1+sp}{2},a}(r) \,\mathrm{d}r \,\mathrm{d}\rho \\
&\leq C \lim_{s \nearrow 1} (1-s) \int_{R}^{\infty} \sinh^{\frac{2m-1}{2}+a}\rho \int_{\rho}^{\infty} \frac{\sinh r}{\sqrt{\cosh r - \cosh \rho}} \left( \frac{-\partial_{r}}{\sinh r} \right)^{m} \widetilde{K}_{\frac{1+sp}{2},a}(r) \,\mathrm{d}r \,\mathrm{d}\rho
\end{split}
\end{equation*}
for some $C = C(m, a, R)$. Therefore, the statement \eqref{eq:infty-even} for $m+1$ follows by the induction hypothesis.
\end{proof}

\begin{lemma} \label{lem:zero}
Let $n\geq 2$ and $p > 1$. For any $R > 0$,
\begin{equation} \label{eq:zero}
\lim_{s \nearrow 1} c_{n, s, p} \int_{0}^{R} \rho^{p} \mathcal{K}_{n,s,p}(\rho)\sinh^{n-1}\rho \,\mathrm{d}\rho = \frac{1}{\pi^{\frac{n-1}{2}}} \frac{\Gamma(\frac{p+n}{2})}{\Gamma(\frac{p+1}{2})}.
\end{equation}
\end{lemma}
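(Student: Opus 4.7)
The plan is to exploit the Euclidean-like asymptotic $\mathcal{K}_{n,s,p}(\rho) \sim \rho^{-n-sp}$ as $\rho \to 0^+$, which reduces the computation to its Euclidean analogue up to a controlled error.

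The first step is to pin down the \emph{exact} leading constant in this asymptotic. Substituting the Bessel expansion $K_{\nu}(z) = \tfrac{1}{2}\Gamma(\nu)(z/2)^{-\nu}(1+O(z^{2}))$, valid for $\nu > 0$, into the explicit formula for $\mathcal{K}_{n,s,p}$, and using that near $\rho = 0$ the operator $-\partial_{\rho}/\sinh\rho$ agrees with $-\partial_{\rho}/\rho$ to leading order, one finds in the odd case $n = 2m+1$ that
\[
\left(\frac{-\partial_{\rho}}{\rho}\right)^{m}\rho^{-(1+sp)} = 2^{m}\,\frac{\Gamma(m+(1+sp)/2)}{\Gamma((1+sp)/2)}\,\rho^{-n-sp}.
\]
The prefactors coming from $C_{2}$, from the Bessel asymptotic of $K_{(1+sp)/2}((n-1)\rho/2)$, and from this product cancel exactly to yield leading coefficient $1$. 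In the even case $n = 2m$ the analogous computation applies to the integrand, followed by evaluating the resulting integral through the beta identity
\[
\int_{0}^{1}\frac{u^{sp+n-1}}{\sqrt{1-u^{2}}}\,\mathrm{d}u = \frac{\sqrt{\pi}}{2}\,\frac{\Gamma((sp+n)/2)}{\Gamma((sp+n+1)/2)},
\]
arising from the outer integration against $1/\sqrt{\cosh r - \cosh \rho}$; again the leading constant is $1$. Thus $\mathcal{K}_{n,s,p}(\rho) = \rho^{-n-sp}(1 + O(\rho^{2}))$ as $\rho \to 0^+$ with a remainder uniform in $s$ close to $1$.

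Next, I would split the integrand as
\[
\rho^{p}\mathcal{K}_{n,s,p}(\rho)\sinh^{n-1}\rho = \rho^{p(1-s)-1} + \rho^{p(1-s)-1}E_{n,s,p}(\rho),
\]
where $E_{n,s,p}(\rho) := \rho^{n+sp}\mathcal{K}_{n,s,p}(\rho)\,(\sinh\rho/\rho)^{n-1} - 1$ satisfies $|E_{n,s,p}(\rho)| \leq C\rho^{2}$ on $[0,R]$, with $C$ independent of $s$ close to $1$. The main term integrates to $c_{n,s,p}R^{p(1-s)}/(p(1-s))$; combining the limit $(1-s)|\Gamma(-s)| \to 1$ as $s \to 1^{-}$ with the explicit form of $c_{n,s,p}$ gives
\[
\lim_{s \to 1^{-}}\frac{c_{n,s,p}}{p(1-s)} = \frac{\Gamma((n+p)/2)}{\pi^{(n-1)/2}\,\Gamma((p+1)/2)},
\]
which is the right-hand side of the claim. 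The error is controlled by $|c_{n,s,p}\int_{0}^{R}\rho^{p(1-s)-1}E_{n,s,p}(\rho)\,\mathrm{d}\rho| \lesssim c_{n,s,p}R^{p(1-s)+2}/(p(1-s)+2)$, which vanishes since $c_{n,s,p} = O(1-s)$ while the integral factor remains bounded.

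The main obstacle is Step 1: establishing the uniform expansion $\mathcal{K}_{n,s,p}(\rho) = \rho^{-n-sp}(1 + O(\rho^{2}))$ with leading constant \emph{exactly} $1$. The even-dimensional case is particularly delicate, because one must simultaneously control the Bessel asymptotic, the replacement of $(-\partial_{r}/\sinh r)^{n/2}$ by $(-\partial_{r}/r)^{n/2}$, and the expansion of $\cosh r - \cosh \rho$ appearing under the singular outer integral, and verify that each correction contributes at most $O(\rho^{2})$ with constants independent of $s$ near $1$.
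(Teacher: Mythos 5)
Your strategy is sound and genuinely different from the paper's, and your computation of the leading constant is correct: the prefactors in $C_{2}$, the Bessel asymptotic, and the factor $2^{m}\Gamma(m+\tfrac{1+sp}{2})/\Gamma(\tfrac{1+sp}{2})$ from iterating $-\partial_\rho/\rho$ do cancel exactly to give $\mathcal{K}_{n,s,p}(\rho) \sim \rho^{-n-sp}$ with constant $1$ (and one can independently cross-check this against the Euclidean normalization in \cite{dTGCV21}). Your main-term-plus-error decomposition and the limit $c_{n,s,p}/(p(1-s)) \to \pi^{-(n-1)/2}\Gamma(\tfrac{p+n}{2})/\Gamma(\tfrac{p+1}{2})$ are also correctly computed. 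The contrast with the paper is instructive: the paper never extracts the leading coefficient of $\mathcal{K}_{n,s,p}$ at all. Instead it works with $(1\pm\varepsilon)$ sandwich bounds on a short interval $[0,\delta]$ (using the Bessel asymptotic at the index $\tfrac{3+sp}{2}$, which is always $>1$), carries these through the iterated operator by induction on $m$ via integration by parts and positivity (\Cref{lem:positivity}), and in the even case isolates the singular-integral asymptotic into the separate \Cref{lem:from3to2} (dominated convergence giving the beta integral you also identified). The contribution of $[\delta,R]$ is killed by $c_{n,s,p}=O(1-s)$ alone. Your approach is more direct and avoids the induction, at the cost of requiring a stronger intermediate result.

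That intermediate result is where you have a genuine, and acknowledged, gap: you assert but do not prove that $\mathcal{K}_{n,s,p}(\rho)=\rho^{-n-sp}\bigl(1+O(\rho^{2})\bigr)$ with constants uniform for $s$ near $1$. Proving it requires more than substituting leading-order asymptotics: you must control the remainder of the Bessel series \emph{and its derivatives} through $m$ applications of $-\partial_\rho/\sinh\rho$ (each of which both differentiates and multiplies by the power series $\rho/\sinh\rho = 1-\rho^2/6+\cdots$), verify that the competing correction $\rho^{2\nu}=\rho^{1+sp}$ is dominated by $\rho^{2}$ (true only once $sp>1$, so the uniformity window in $s$ is $p$-dependent), and, in the even case, additionally show that the replacement of $\cosh r - \cosh\rho$ by $\tfrac{1}{2}(r^{2}-\rho^{2})$ in the singular outer integral also contributes a relative $O(\rho^{2})$ error with $s$-uniform constants. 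None of this is impossible, and your identification of the obstacle is accurate, but as written the central estimate that makes your split $\rho^{p(1-s)-1}\bigl(1+E_{n,s,p}(\rho)\bigr)$ work is not established. Until it is, the proposal outlines a plausible program rather than a complete proof.
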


\begin{proof}
Let us first consider the odd-dimensional case $n=2m+1$ with $m \geq 1$. One can easily check that \eqref{eq:zero} is equivalent to
\begin{equation} \label{eq:odd}
\begin{split}
&\lim_{s \nearrow 1} (1-s) \int_{0}^{R} \rho^{p} \sinh^{2m} \rho \left( \frac{-\partial_{\rho}}{\sinh \rho} \right)^{m} \widetilde{K}_{\frac{1+sp}{2},m}(\rho) \, \mathrm{d}\rho \\
&= \frac{2^{m-1}}{p} \left( \frac{2}{m} \right)^{\frac{p+1}{2}} \Gamma\left( \frac{p+2m+1}{2} \right)
\end{split}
\end{equation}
by using $\lim_{s \nearrow 1} (1-s)|\Gamma(-s)| = 1$. Actually, we will prove the following statement, which is slightly stronger than \eqref{eq:odd}:
\begin{equation} \label{eq:zero-odd}
\begin{split}
\lim_{s \nearrow 1} (1-s) \int_{0}^{R} \rho^{p} \sinh^{2m} \rho \left( \frac{-\partial_{\rho}}{\sinh \rho} \right)^{m} \widetilde{K}_{\frac{1+sp}{2},a}(\rho) \, \mathrm{d}\rho \\
= \frac{2^{m-1}}{p} \left( \frac{2}{a} \right)^{\frac{p+1}{2}} \Gamma\left( \frac{p+2m+1}{2} \right) \quad\text{for each } a \geq 1.
\end{split}
\end{equation}
Let $\varepsilon \in (0,1)$, then there exists $\delta_{0} \in (0,1)$ such that
\begin{equation} \label{eq:delta-sinh}
1-\varepsilon \leq \frac{\sinh \rho}{\rho} \leq 1+\varepsilon
\end{equation}
for all $\rho \in (0, \delta_{0})$. Moreover, using the asymptotic behavior \eqref{eq:K-asymptotic} of the modified Bessel function, for each $s \in [0,1]$ we find $\delta_{s} > 0$ such that
\begin{equation} \label{eq:delta-K}
\frac{1-\varepsilon}{2} \Gamma\left( \frac{3+sp}{2} \right)\left( \frac{\rho}{2} \right)^{-\frac{3+sp}{2}} \leq K_{\frac{3+sp}{2}}(\rho) \leq \frac{1+\varepsilon}{2} \Gamma\left( \frac{3+sp}{2} \right)\left( \frac{\rho}{2} \right)^{-\frac{3+sp}{2}}
\end{equation}
for all $\rho \in (0, \delta_{s})$. Furthermore, since $\{K_{\nu}\}_{\nu \in [3/2, (3+p)/2]}$ is equicontinuous, we may assume that $\delta_{s}>0$ has been chosen continuously on $s$. Let us take $\delta = \delta_{0} \land \min_{s \in [0,1]} \delta_{s} \land R$, then $\delta = \delta(\varepsilon, p, R) > 0$, and \eqref{eq:delta-sinh} and \eqref{eq:delta-K} hold for all $\rho \in (0, \delta)$.

We fix $a \geq 1$ and denote by $G_{s, p, m, a}(\rho)$ the integrand in the left-hand side of \eqref{eq:zero-odd}. Then, $|G_{s, p, m, a}(\rho)|$ is bounded by the function $\sup_{0 \leq s \leq 1} |G_{s, p, m, a}(\rho)|$, which is independent of $s$ and bounded on a compact interval $[\delta/a, R]$. Thus, we have
\begin{equation*}
\lim_{s \nearrow 1} (1-s) \int_{\delta/a}^{R} G_{s, p, m, a}(\rho) \, \mathrm{d}\rho = 0,
\end{equation*}
and hence
\begin{equation*}
\lim_{s \nearrow 1} (1-s) \int_{0}^{R} G_{s, p, m, a}(\rho) \, \mathrm{d}\rho = \lim_{s \nearrow 1} (1-s) \int_{0}^{\delta/a} G_{s, p, m, a}(\rho) \, \mathrm{d}\rho.
\end{equation*}
Let us now prove \eqref{eq:zero-odd} by induction. When $m=1$, we first use \eqref{eq:scrK-recurrence} to have
\begin{equation*}
G_{s, p, 1, a}(\rho) = a\rho^{p-\frac{1+sp}{2}} K_{\frac{3+sp}{2}}(a\rho) \sinh \rho.
\end{equation*}
If $\rho < \delta/a$, then $\rho \leq a\rho < \delta$ since $a \geq 1$. Thus, we utilize \eqref{eq:delta-sinh} and \eqref{eq:delta-K} to obtain
\begin{equation*}
(1-\varepsilon)^2 \left( \frac{2}{a} \right)^{\frac{1+sp}{2}} \Gamma\left( \frac{3+sp}{2} \right) \rho^{p(1-s)-1} \leq G_{s, p, 1, a}(\rho) \leq (1+\varepsilon)^2 \left( \frac{2}{a} \right)^{\frac{1+sp}{2}} \Gamma\left( \frac{3+sp}{2} \right) \rho^{p(1-s)-1}.
\end{equation*}
This leads us to the inequalities
\begin{equation*}
\begin{split}
\lim_{s \nearrow 1} (1-s) \int_{0}^{R} G_{s, p, 1, a}(\rho) \, \mathrm{d}\rho
&= \lim_{s \nearrow 1} (1-s) \int_{0}^{\delta/a} G_{s, p, 1, a}(\rho) \, \mathrm{d}\rho \\
&\leq \lim_{s \nearrow 1} (1-s) (1+\varepsilon)^2 \left( \frac{2}{a} \right)^{\frac{1+sp}{2}} \Gamma\left( \frac{3+sp}{2} \right) \int_{0}^{\delta/a} \rho^{p(1-s)-1} \, \mathrm{d}\rho \\
&= (1+\varepsilon)^2 \frac{1}{p} \left( \frac{2}{a} \right)^{\frac{p+1}{2}} \Gamma\left( \frac{p+3}{2} \right)
\end{split}
\end{equation*}
and
\begin{equation*}
\lim_{s \nearrow 1} (1-s) \int_{0}^{R} G_{s, p, 1, a}(\rho) \, \mathrm{d}\rho \geq (1-\varepsilon)^{2} \frac{1}{p} \left( \frac{2}{a} \right)^{\frac{p+1}{2}} \Gamma\left( \frac{p+3}{2} \right).
\end{equation*}
Therefore, the statement \eqref{eq:zero-odd} for $m=1$ follows by taking $\varepsilon \to 0$.

Assume now that \eqref{eq:zero-odd} holds for $m \geq 1$. Then, a similar argument shows
\begin{equation*}
\begin{split}
&\lim_{s \nearrow 1} (1-s) \int_{0}^{R} G_{s, p, m+1, a}(\rho) \, \mathrm{d}\rho \\
&= \lim_{s \nearrow 1} (1-s) \int_{0}^{\delta/a} \rho^{p} \sinh^{2m+2} \rho \left( \frac{-\partial_{\rho}}{\sinh \rho} \right)^{m+1} \widetilde{K}_{\frac{1+sp}{2},a}(\rho) \, \mathrm{d}\rho \\
&\leq \lim_{s \nearrow 1} (1-s)(1+\varepsilon)^{2m+1} \int_{0}^{\delta/a} \rho^{p+2m+1}(-\partial_\rho) \left( \frac{-\partial_{\rho}}{\sinh \rho} \right)^{m} \widetilde{K}_{\frac{1+sp}{2},a}(\rho) \, \mathrm{d}\rho,
\end{split}
\end{equation*}
where nonnegativity of the integrands follows from \Cref{lem:positivity}. Using the integration by parts, \eqref{eq:delta-sinh}, and the induction hypothesis, we arrive at
\begin{equation*}
\begin{split}
&\lim_{s \nearrow 1} (1-s) \int_{0}^{R} G_{s, p, m+1, a}(\rho) \, \mathrm{d}\rho \\
&\leq (1+\varepsilon)^{2m+1} (p+2m+1) \lim_{s \nearrow 1} (1-s) \int_{0}^{\delta/a} \rho^{p+2m} \left( \frac{-\partial_\rho}{\sinh \rho} \right)^{m} \widetilde{K}_{\frac{1+sp}{2},a}(\rho) \, \mathrm{d}\rho \\
&\leq \frac{(1+\varepsilon)^{2m+1}}{(1-\varepsilon)^{2m}} (p+2m+1) \lim_{s \nearrow 1} (1-s) \int_{0}^{\delta/a} \rho^{p} \sinh^{2m}\rho \left( \frac{-\partial_\rho}{\sinh \rho} \right)^{m} \widetilde{K}_{\frac{1+sp}{2},a}(\rho) \, \mathrm{d}\rho \\
&= \frac{(1+\varepsilon)^{2m+1}}{(1-\varepsilon)^{2m}} (p+2m+1) \frac{2^{m-1}}{p} \left( \frac{2}{a} \right)^{\frac{p+1}{2}} \Gamma\left( \frac{p+2m+1}{2} \right) \\
&= \frac{(1+\varepsilon)^{2m+1}}{(1-\varepsilon)^{2m}} \frac{2^{m}}{p} \left( \frac{2}{a} \right)^{\frac{p+1}{2}} \Gamma\left( \frac{p+2m+3}{2} \right).
\end{split}
\end{equation*}
Similarly, we obtain
\begin{equation*}
\lim_{s \nearrow 1} (1-s) \int_{0}^{R} G_{s, p, m+1, a}(\rho) \, \mathrm{d}\rho \geq \frac{(1-\varepsilon)^{2m+1}}{(1+\varepsilon)^{2m}} \frac{2^{m}}{p} \left( \frac{2}{a} \right)^{\frac{p+1}{2}} \Gamma\left( \frac{p+2m+3}{2} \right),
\end{equation*}
from which \eqref{eq:zero-odd} for $m+1$ follows by taking $\varepsilon \to 0$. The statement \eqref{eq:zero-odd} has been proved for all $m \in \mathbb{N}$, finishing the proof of \eqref{eq:zero} for the odd-dimensional case.

Let us next consider the even-dimensional case $n = 2m$ with $m \geq 1$. In this case, \eqref{eq:zero} is equivalent to
\begin{equation*}
\begin{split}
&\lim_{s \nearrow 1} (1-s) \int_{0}^{R} \rho^{p} \sinh^{2m-1}\rho \int_{\rho}^{\infty} \frac{\sinh r}{\sqrt{\cosh r - \cosh \rho}} \left( \frac{-\partial_{r}}{\sinh r} \right)^{m} \widetilde{K}_{\frac{1+sp}{2},\frac{2m-1}{2}}(r) \, \mathrm{d}r \,\mathrm{d}\rho \\
&= \sqrt{\frac{\pi}{2}} \frac{2^{m-1}}{p} \left( \frac{2}{m-1/2} \right)^{\frac{p+1}{2}} \Gamma\left(\frac{p+2m}{2} \right).
\end{split}
\end{equation*}
As in the odd-dimensional case, we will prove a stronger statement:
\begin{equation} \label{eq:zero-even}
\begin{split}
&\lim_{s \nearrow 1} (1-s) \int_{0}^{R} \rho^{p} \sinh^{2m-1}\rho \int_{\rho}^{\infty} \frac{\sinh r}{\sqrt{\cosh r - \cosh \rho}} \left( \frac{-\partial_{r}}{\sinh r} \right)^{m} \widetilde{K}_{\frac{1+sp}{2},a}(r) \, \mathrm{d}r \,\mathrm{d}\rho \\
&= \sqrt{\frac{\pi}{2}} \frac{2^{m-1}}{p} \left( \frac{2}{a} \right)^{\frac{p+1}{2}} \Gamma\left(\frac{p+2m}{2} \right) \quad\text{for each } a \geq 1/2.
\end{split}
\end{equation}
Recall that we have taken $\delta$ so that \eqref{eq:delta-sinh} and \eqref{eq:delta-K} hold for all $\rho \in (0, \delta)$. Let us fix $a \geq 1/2$. By \Cref{lem:even_dim}, for each $s \in [0,1]$ we find $\tilde{\delta}_{s} > 0$ such that
\begin{equation} \label{eq:delta-intK}
\begin{split}
(1-\varepsilon) \sqrt{\frac{\pi}{2}} \frac{\Gamma(\frac{2+sp}{2})}{\Gamma(\frac{3+sp}{2})} \rho^{-\frac{1+sp}{2}} K_{\frac{3+sp}{2}}(a\rho)
&\leq \int_{\rho}^{\infty} \frac{r^{-\frac{1+sp}{2}} K_{\frac{3+sp}{2}}(ar)}{\sqrt{\cosh r - \cosh \rho}} \, \mathrm{d}r \\
&\leq (1+\varepsilon) \sqrt{\frac{\pi}{2}} \frac{\Gamma(\frac{2+sp}{2})}{\Gamma(\frac{3+sp}{2})} \rho^{-\frac{1+sp}{2}} K_{\frac{3+sp}{2}}(a\rho)
\end{split}
\end{equation}
for all $\rho \in (0, \tilde{\delta}_{s})$. Moreover, we may assume that $\tilde{\delta}_{s}$ has been chosen continuously on $s$ since $\{K_\nu\}_{\nu\in [3/2,(3+p)/2]}$ is equicontinuous. Let $\tilde{\delta} = \delta \land \min_{s \in [0,1]} \tilde{\delta}_{s}$, then $\delta = \delta(\varepsilon, p, R, a) > 0$ and \eqref{eq:delta-intK} holds for all $\rho \in (0, \tilde{\delta})$.

We denote by $H_{s, p, m, a}(\rho)$ the integrand in the left-hand side of \eqref{eq:zero-even}. Then, the same argument as in the odd-dimensional case shows
\begin{equation*}
\lim_{s \nearrow 1} (1-s) \int_{0}^{R} H_{s, p, m, a}(\rho) \, \mathrm{d}\rho = \lim_{s \nearrow 1} (1-s) \int_{0}^{\frac{\delta}{2a}} H_{s, p, m, a}(\rho) \, \mathrm{d}\rho.
\end{equation*}
We argue by induction again to prove \eqref{eq:zero-even}. If $m=1$, then
\begin{equation*}
H_{s, p, 1, a}(\rho) = a \rho^{p} \sinh\rho \int_{\rho}^{\infty} \frac{1}{\sqrt{\cosh r - \cosh \rho}} r^{-\frac{1+sp}{2}} K_{\frac{3+sp}{2}}(ar) \, \mathrm{d}r.
\end{equation*}
Since $a \geq 1/2$, we have $\rho < \delta$ and $a\rho < \delta$ for $\rho < \frac{\delta}{2a}$. Thus, by \eqref{eq:delta-sinh}, \eqref{eq:delta-intK}, and \eqref{eq:delta-K}, we obtain
\begin{equation*}
\begin{split}
(1-\varepsilon)^{3} \sqrt{\frac{\pi}{2}} \left( \frac{2}{a} \right)^{\frac{1+sp}{2}} \Gamma\left( \frac{2+sp}{2} \right) \rho^{p(1-s)-1}
&\leq H_{s, p, 1, a}(\rho) \\
&\leq (1+\varepsilon)^{3} \sqrt{\frac{\pi}{2}} \left( \frac{2}{a} \right)^{\frac{1+sp}{2}} \Gamma\left( \frac{2+sp}{2} \right) \rho^{p(1-s)-1}.
\end{split}
\end{equation*}
Therefore, we have
\begin{equation*}
\begin{split}
(1-\varepsilon)^{3} \sqrt{\frac{\pi}{2}} \frac{1}{p} \left( \frac{2}{a} \right)^{\frac{p+1}{2}} \Gamma\left( \frac{p+3}{2} \right)
&\leq \lim_{s \nearrow 1} (1-s) \int_{0}^{R} H_{s, p, 1, a}(\rho) \, \mathrm{d}\rho \\
&\leq (1+\varepsilon)^{3} \sqrt{\frac{\pi}{2}} \frac{1}{p} \left( \frac{2}{a} \right)^{\frac{p+1}{2}} \Gamma\left( \frac{p+3}{2} \right),
\end{split}
\end{equation*}
from which we deduce \eqref{eq:zero-even} for $m=1$ by taking $\varepsilon \to 0$.

Suppose that \eqref{eq:zero-even} is true for $m \geq 1$. Then, by \eqref{eq:delta-sinh}, \Cref{lem:recurrence}, and \Cref{lem:positivity}, we have
\begin{equation*}
\begin{split}
&\lim_{s \nearrow 1} (1-s) \int_{0}^{R} H_{s, p, m+1, a}(\rho) \, \mathrm{d}\rho \\
&= \lim_{s \nearrow 1} (1-s) \int_{0}^{\frac{\delta}{2a}} \rho^{p} \sinh^{2m+1}\rho \int_{\rho}^{\infty} \frac{\sinh r}{\sqrt{\cosh r - \cosh \rho}} \left( \frac{-\partial_{r}}{\sinh r} \right)^{m+1} \widetilde{K}_{\frac{1+sp}{2},a}(r) \, \mathrm{d}r \, \mathrm{d}\rho \\
&\leq \lim_{s \nearrow 1} (1-s) (1+\varepsilon)^{2m} \int_{0}^{\frac{\delta}{2a}} \rho^{p+2m} (-\partial_{\rho}) \int_{\rho}^{\infty} \frac{\sinh r}{\sqrt{\cosh r - \cosh \rho}} \left( \frac{-\partial_{r}}{\sinh r} \right)^{m} \widetilde{K}_{\frac{1+sp}{2},a}(r) \, \mathrm{d}r \, \mathrm{d}\rho.
\end{split}
\end{equation*}
Using the integration by parts, \eqref{eq:delta-sinh}, and the induction hypothesis, we arrive at
\begin{equation*}
\begin{split}
&\lim_{s \nearrow 1} (1-s) \int_{0}^{R} H_{s, p, m+1, a}(\rho) \, \mathrm{d}\rho \\
&= (1+\varepsilon)^{2m} (p+2m) \\
&\quad\times \lim_{s \nearrow 1} (1-s) \int_{0}^{\frac{\delta}{2a}} \rho^{p+2m-1} \int_{\rho}^{\infty} \frac{\sinh r}{\sqrt{\cosh r - \cosh \rho}} \left( \frac{-\partial_{r}}{\sinh r} \right)^{m} \widetilde{K}_{\frac{1+sp}{2},a}(r) \, \mathrm{d}r \, \mathrm{d}\rho \\
&\leq \frac{(1+\varepsilon)^{2m}}{(1-\varepsilon)^{2m-1}} (p+2m) \lim_{s \nearrow 1} (1-s) \int_{0}^{\frac{\delta}{2a}} H_{s, p, m, a}(\rho) \, \mathrm{d}\rho \\
&= \frac{(1+\varepsilon)^{2m}}{(1-\varepsilon)^{2m-1}} \sqrt{\frac{\pi}{2}} \frac{2^{m}}{p} \left( \frac{2}{a} \right)^{\frac{p+1}{2}} \Gamma\left(\frac{p+2m+2}{2} \right).
\end{split}
\end{equation*}
The inequality
\begin{equation*}
\lim_{s \nearrow 1} (1-s) \int_{0}^{R} H_{s, p, m+1, a}(\rho) \, \mathrm{d}\rho \geq \frac{(1-\varepsilon)^{2m}}{(1+\varepsilon)^{2m-1}} \sqrt{\frac{\pi}{2}} \frac{2^{m}}{p} \left( \frac{2}{a} \right)^{\frac{p+1}{2}} \Gamma\left( \frac{p+2m+2}{2} \right)
\end{equation*}
can be obtained in the same way. Thus, we conclude that \eqref{eq:zero-even} for $m+1$ holds by taking $\varepsilon \to 0$. This finishes the proof for the even-dimensional case.
\end{proof}

\begin{lemma} \label{lem:zero-beta}
Let $n\geq 2$ and $p > 1$. For any $R > 0$ and $\beta > 0$,
\begin{equation} \label{eq:zero-beta}
\lim_{s \nearrow 1} c_{n,s,p} \int_{0}^{R} \rho^{p+\beta} \mathcal{K}_{n,s,p}(\rho)\sinh^{n-1}\rho \,\mathrm{d}\rho = 0.
\end{equation}
\end{lemma}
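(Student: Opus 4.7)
My plan is to adapt the inductive strategy of \Cref{lem:zero} to the present setting; the additional $\rho^\beta$ weight with $\beta > 0$ replaces the precise power matching of \Cref{lem:zero} with strictly better integrability near the origin. First, using $\lim_{s \nearrow 1}(1-s)|\Gamma(-s)| = 1$, I would reformulate \eqref{eq:zero-beta}, in the odd case $n = 2m+1$, as
\begin{equation*}
\lim_{s \nearrow 1} (1-s) \int_0^R \rho^{p+\beta} \sinh^{2m}\rho \left( \frac{-\partial_\rho}{\sinh\rho} \right)^m \mathscr{K}_{\frac{1+sp}{2},m}(\rho)\,\mathrm{d}\rho = 0,
\end{equation*}
with the analogous reformulation (via \Cref{lem:recurrence}) in the even case. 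Since the integrand is uniformly bounded in $s \in [0,1]$ on any compact subinterval of $(0, R]$, the same localization used in \Cref{lem:zero} reduces matters to the integral over $(0, \delta/a)$, where the asymptotic estimates \eqref{eq:delta-sinh}, \eqref{eq:delta-K}, and (in the even case) \eqref{eq:delta-intK} are available. I would then induct on $m$ by repeating the integration-by-parts scheme from \Cref{lem:zero} verbatim with $p$ replaced by $p+\beta$: the induction step bounds $\sinh^{2m+2}\rho$ by a constant multiple of $\rho^{2m+1}\sinh\rho$, uses the operator identity $\sinh\rho \cdot \left(\frac{-\partial_\rho}{\sinh\rho}\right)^{m+1} = -\partial_\rho \left(\frac{-\partial_\rho}{\sinh\rho}\right)^m$, and integrates by parts, producing a coefficient $(p+\beta+2m+1)$ times the corresponding integral at level $m$, which vanishes by induction hypothesis. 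The boundary term at $0$ carries a factor $\rho^{p+\beta+2m+1} \left(\frac{-\partial_\rho}{\sinh\rho}\right)^m \mathscr{K}_{\frac{1+sp}{2},a}(\rho) \sim \rho^{p(1-s)+\beta} \to 0$, using that iteration of $\frac{-\partial_\rho}{\sinh\rho}$ applied to $\mathscr{K}_{\nu,a}$ behaves like $\rho^{-2\nu-2m}$ near $0$.

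The crux is the base case $m=1$, where the hypothesis $\beta > 0$ enters decisively. Using the identity $\frac{-\partial_\rho}{\sinh\rho}\mathscr{K}_{\nu,a}(\rho) = \frac{a\rho}{\sinh\rho}\mathscr{K}_{\nu+1,a}(\rho)$ from \Cref{sec:Bessel}, the integrand reduces to $a\rho^{p+\beta-\frac{1+sp}{2}}\sinh\rho \cdot K_{\frac{3+sp}{2}}(a\rho)$. The asymptotics of $\sinh\rho$ and $K_{\frac{3+sp}{2}}$ near $0$ yield
\begin{equation*}
(1-s) \int_0^{\delta/a} \rho^{p+\beta}\sinh^2\rho \left(\frac{-\partial_\rho}{\sinh\rho}\right)\mathscr{K}_{\frac{1+sp}{2},a}(\rho)\,\mathrm{d}\rho \lesssim \frac{(1-s)(\delta/a)^{p(1-s)+\beta}}{p(1-s)+\beta}.
\end{equation*}
Because $\beta > 0$, the denominator is bounded below by $\beta$ uniformly in $s \in [0,1]$, so the right-hand side is $O(1-s) \to 0$. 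This is the quantitative mechanism: the $\rho^\beta$ factor replaces the $\rho^{p(1-s)-1}$ integrand of \Cref{lem:zero} by $\rho^{p(1-s)+\beta-1}$, whose antiderivative at $0$ remains bounded as $s \to 1$, so the $1/(1-s)$ blow-up that produced a nonzero limit there is absent here.

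The even dimensional case $n = 2m$ is handled by the same plan, with \Cref{lem:from3to2} controlling the outer integral at the base case and \Cref{lem:recurrence} propagating the induction. I do not expect a substantive obstacle: the lemma is structurally a simpler counterpart to \Cref{lem:zero}, since the $\beta > 0$ buffer prevents any delicate cancellation. The only bookkeeping item is to ensure that the $\beta > 0$ shift is preserved through each integration by parts, which is immediate from the exponent arithmetic.
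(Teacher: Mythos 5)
Your proposal is correct and matches the paper's proof essentially step for step: the same reformulation via $\lim_{s\nearrow 1}(1-s)|\Gamma(-s)| = 1$, the same localization to $(0,\delta/a)$, the same induction on $m$ transported from \Cref{lem:zero} with $p$ replaced by $p+\beta$, and the same base-case computation showing that the antiderivative factor $\bigl(p(1-s)+\beta\bigr)^{-1}(\delta/a)^{p(1-s)+\beta}$ stays bounded because $\beta > 0$, so the prefactor $(1-s)$ forces the limit to vanish. You have also correctly identified the heuristic reason the $\beta > 0$ hypothesis matters (it removes the $1/(1-s)$ blow-up that produces the nonzero constant in \Cref{lem:zero}), which is the crux of the argument.
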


\begin{proof}
We proceed as in the previous lemma to prove \eqref{eq:zero-beta}. When $n=2m+1$ with $m \geq 1$, we show
\begin{equation*}
\lim_{s \nearrow 1} (1-s) \int_{0}^{R} \rho^{p+\beta} \sinh^{2m}\rho \left( \frac{-\partial_\rho}{\sinh \rho} \right)^{m} \widetilde{K}_{\frac{1+sp}{2},a}(\rho) \,\mathrm{d}\rho = 0 \quad\text{for each } a \geq 1
\end{equation*}
by induction. Indeed, for $\varepsilon \in (0,1)$ let $\delta > 0$ be the constant given in the proof of \Cref{lem:zero}. Then, by using \eqref{eq:delta-sinh} and \eqref{eq:delta-K} we prove
\begin{equation*}
\begin{split}
&\lim_{s \nearrow 1} (1-s) \int_{0}^{R} \rho^{\beta} G_{s, p, 1, a}(\rho) \,\mathrm{d}\rho \\
&\leq \lim_{s \nearrow} (1-s)(1+\varepsilon)^{2} \left( \frac{2}{a} \right)^{\frac{1+sp}{2}} \Gamma\left( \frac{3+sp}{2} \right) \int_{0}^{\delta/a} \rho^{p(1-s)+\beta-1} \,\mathrm{d}\rho \\
&= \lim_{s \nearrow} (1-s)(1+\varepsilon)^{2} \left( \frac{2}{a} \right)^{\frac{1+sp}{2}} \Gamma\left( \frac{3+sp}{2} \right) \frac{1}{p(1-s)+\beta} \left( \frac{\delta}{a} \right)^{p(1-s)+\beta} = 0
\end{split}
\end{equation*}
for the case $m=1$, where $G_{s, p, m, a}$ is the function defined in the proof of \Cref{lem:zero}. Moreover, one can follow the steps in the proof of \Cref{lem:zero} to obtain
\begin{equation*}
\begin{split}
&\frac{(1-\varepsilon)^{2m+1}}{(1+\varepsilon)^{2m}} (p+\beta+2m+1) \lim_{s \nearrow 1} (1-s) \int_{0}^{R} \rho^{\beta} G_{s, p, m, a}(\rho) \,\mathrm{d}\rho \\
&\leq \lim_{s \nearrow 1} (1-s) \int_{0}^{R} \rho^{\beta} G_{s, p, m+1, a}(\rho) \,\mathrm{d}\rho \\
&\leq \frac{(1+\varepsilon)^{2m+1}}{(1-\varepsilon)^{2m}} (p+\beta+2m+1) \lim_{s \nearrow 1} (1-s) \int_{0}^{R} \rho^{\beta} G_{s, p, m, a}(\rho) \,\mathrm{d}\rho,
\end{split}
\end{equation*}
which proves the induction step.

The even-dimensional case $n=2m$ with $m \geq 1$ can also be verified by proving
\begin{equation*}
\begin{split}
\lim_{s \nearrow 1} (1-s) \int_{0}^{R} \rho^{p+\beta} \sinh^{2m-1}\rho \int_{\rho}^{\infty} \frac{\sinh r}{\sqrt{\cosh r - \cosh \rho}} \left( \frac{-\partial_{r}}{\sinh r} \right)^{m} \widetilde{K}_{\frac{1+sp}{2},a}(r) \, \mathrm{d}r \,\mathrm{d}\rho = 0
\end{split}
\end{equation*}
for each $a \geq 1/2$. This can be proved by the induction as in the previous lemma, so we omit the proof.
\end{proof}

Let us provide the proof of \Cref{thm:convergence} by using the pointwise representation \eqref{eq:pw-representation} and Taylor's theorem, and gathering pieces of limits in the preceding lemmas.

\begin{proof} [Proof of \Cref{thm:convergence}]
Let $u \in C^{2}_{b}(\mathbb{H}^{n})$ and let $x \in \mathbb{H}^{n}$ be such that $\nabla u(x) \neq 0$. Let $R > 0$, then by \Cref{lem:infty} we first have
\begin{equation*}
\left| c_{n,s,p} \int_{d(x,\xi) \geq R} \Phi_{p}(u(x)-u(\xi)) \mathcal{K}_{n,s,p}(d(x,\xi)) \,\mathrm{d} \xi \right| \lesssim c_{n,s,p} \int_{R}^{\infty} \mathcal{K}_{n,s,p}(\rho) \sinh^{n-1}\rho \,\mathrm{d}\rho \to 0
\end{equation*}
as $s \to 1^{-}$. Thus, by the pointwise representation \eqref{eq:pw-representation} of the fractional $p$-Laplacian, we obtain
\begin{equation} \label{eq:limit-1}
\lim_{s \nearrow 1} (-\Delta_{\mathbb{H}^{n}})^{s}_{p} u(x) = \lim_{s \nearrow 1} c_{n,s,p} \mathrm{P.V.} \int_{d(x, \xi)<R} \Phi_{p}(u(x)-u(\xi)) \mathcal{K}_{n,s,p}(d(x, \xi)) \,\mathrm{d}\xi.
\end{equation}
Let $v = \exp_{x}^{-1}\xi$ be a tangent vector in $T_{x}\mathbb{H}^{n}$ and denote by $\mathcal{T}_{x}\xi$ the point $\exp_{x}(-v) \in \mathbb{H}^{n}$. Since $\mathcal{K}_{n,s,p}(d(x,\xi)) = \mathcal{K}_{n,s,p}(d(x,\mathcal{T}_{x}\xi))$, we write
\begin{equation*}
\begin{split}
&\int_{d(x, \xi)<R} \Phi_{p}(u(x)-u(\xi)) \mathcal{K}_{n,s,p}(d(x, \xi)) \,\mathrm{d}\xi \\
&= \frac{1}{2} \int_{d(x, \xi)<R} |u(x)-u(\xi)|^{p-2}(2u(x)-u(\xi)-u(\mathcal{T}_{x}\xi)) \mathcal{K}_{n,s,p}(d(x,\xi)) \,\mathrm{d}\xi \\
&\quad + \frac{1}{2} \int_{d(x, \xi) < R} \left( |u(x)-u(\mathcal{T}_{x}\xi)|^{p-2} - |u(x)-u(\xi)|^{p-2} \right)(u(x)-u(\mathcal{T}_{x}\xi)) \mathcal{K}_{n,s,p}(d(x,\xi)) \,\mathrm{d}\xi \\
&=: J_{1} + J_{2}.
\end{split}
\end{equation*}
By Taylor's theorem, we have
\begin{equation*}
u(x)-u(\xi) = -\langle \nabla u(x), v \rangle + O(|v|^{2}), \quad u(x)-u(\mathcal{T}_{x}\xi) = \langle \nabla u(x), v\rangle + O(|v|^{2}),
\end{equation*}
and
\begin{equation*}
2u(x)-u(\xi)-u(\mathcal{T}_{x}\xi) = -\langle D^{2}u(x) v, v\rangle + O(|v|^{3}).
\end{equation*}
If we write $\omega = v/|v|$, then
\begin{equation*}
|u(x) - u(\xi)|^{p-2} = |v|^{p-2} |\langle \nabla u(x), \omega \rangle|^{p-2} + O(|v|^{p-1}).
\end{equation*}
Thus, we obtain
\begin{equation*}
|u(x)-u(\xi)|^{p-2}(2u(x)-u(\xi)-u(\mathcal{T}_{x}\xi)) = -|v|^{p} |\langle \nabla u(x), \omega \rangle|^{p-2} \langle D^{2}u(x) \omega, \omega \rangle + O(|v|^{p+1}).
\end{equation*}
Therefore, we deduce
\begin{equation} \label{eq:J-1}
\begin{split}
J_{1}
&= -\frac{1}{2} \int_{0}^{R} \int_{\mathbb{S}^{n-1}} \rho^{p} |\langle \nabla u(x), \omega \rangle|^{p-2} \langle D^{2}u(x) \omega, \omega \rangle \mathcal{K}_{n,s,p}(\rho) \sinh^{n-1} \rho \, \mathrm{d}\omega \,\mathrm{d}\rho \\
&\quad + \frac{1}{2} \int_{d(x, \xi)<R} O(d(x, \xi)^{p+1}) \mathcal{K}_{n,s,p}(d(x, \xi)) \,\mathrm{d}\xi.
\end{split}
\end{equation}
For $J_{2}$, since
\begin{equation*}
\begin{split}
&|u(\mathcal{T}_{x}\xi)-u(x)|^{p-2} - |u(x)-u(\xi)|^{p-2} \\
&= (p-2)|v|^{p-1} \langle \nabla u(x), \omega \rangle |\langle \nabla u(x), \omega \rangle|^{p-4} \langle D^{2}u(x) \omega, \omega \rangle + O(|v|^{p}),
\end{split}
\end{equation*}
we have
\begin{equation*}
\begin{split}
&\left( |u(\mathcal{T}_{x}\xi)-u(x)|^{p-2} - |u(x)-u(\xi)|^{p-2} \right)(u(x)-u(\mathcal{T}_{x}\xi)) \\
&= -(p-2)|v|^{p} |\langle \nabla u(x), \omega \rangle|^{p-2} \langle D^{2}u(x) \omega, \omega \rangle + O(|v|^{p+1}).
\end{split}
\end{equation*}
Thus, we obtain
\begin{equation} \label{eq:J-2}
\begin{split}
J_{2} 
&= -\frac{p-2}{2} \int_{0}^{R} \int_{\mathbb{S}^{n-1}} \rho^{p} |\langle \nabla u(x), \omega \rangle|^{p-2} \langle D^{2}u(x) \omega, \omega \rangle \mathcal{K}_{n,s,p}(\rho) \sinh^{n-1} \rho \, \mathrm{d}\omega \,\mathrm{d}\rho \\
&\quad + \frac{1}{2} \int_{d(x, \xi)<R} O(d(x, \xi)^{p+1}) \mathcal{K}_{n,s,p}(d(x, \xi)) \,\mathrm{d}\xi.
\end{split}
\end{equation}
Combining \eqref{eq:limit-1}, \eqref{eq:J-1}, and \eqref{eq:J-2}, and using \Cref{lem:zero} and \Cref{lem:zero-beta}, we arrive at
\begin{equation*}
\lim_{s\nearrow 1} (-\Delta_{\mathbb{H}^{n}})^{s}_{p} u(x) = -\frac{p-1}{2} \frac{1}{\pi^{\frac{n-1}{2}}} \frac{\Gamma(\frac{p+n}{2})}{\Gamma(\frac{p+1}{2})} \int_{\mathbb{S}^{n-1}} |\langle \nabla u(x), \omega \rangle|^{p-2} \langle D^{2}u(x) \omega, \omega \rangle \,\mathrm{d}\omega.
\end{equation*}
The argument as in the proof of \cite[Theorem 2.8]{BS22} shows
\begin{equation*}
\int_{\mathbb{S}^{n-1}} |\langle \nabla u(x), \omega \rangle|^{p-2} \langle D^{2}u(x) \omega, \omega \rangle \,\mathrm{d}\omega = \gamma_{p} (\Delta_{\mathbb{H}^{n}})_{p}
\end{equation*}
when $\nabla u(x) \neq 0$, where
\begin{equation} \label{eq:gamma-p}
\gamma_{p} = \int_{\mathbb{S}^{n-1}} |\omega_{n}|^{p-2} \omega_{1}^{2} \,\mathrm{d}\omega = \pi^{\frac{n-1}{2}} \frac{\Gamma(\frac{p-1}{2})}{\Gamma(\frac{p+n}{2})}.
\end{equation}
See \cite[Lemma 2.1]{IN10} for the computation of \eqref{eq:gamma-p}. This finishes the proof.
\end{proof}


\begin{appendix}


\section{Auxiliary result} \label{sec:appendix}


In this section, we recall an auxiliary result from \cite{dTGCV21} that helps proving \Cref{def:kernel} in \Cref{sec:kernel}.

\begin{lemma} \label{lem:appendix}
Let $p > 1$, $r > 0$, $u \in C^{2}_{b}(\mathbb{H}^{n})$, and $x \in \mathbb{H}^{n}$. If $p \in (1,\frac{2}{2-s}]$, assume $\nabla u(x) \neq 0$ additionally. If $K : \mathbb{H}^{n} \to \mathbb{R}$ is rotationally symmetric with respect to $x$, that is, $K(\xi)=K(d(x,\xi))$ for all $\xi \in \mathbb{H}^{n}$, and $\int_{\mathbb{H}^n} K(\xi)|\xi|^\alpha \,d\xi < \infty$, then
\begin{equation*}
\left| \mathrm{P.V.} \int_{d(x, \xi)<r} \Phi_{p}(u(x)-u(\xi)) K(d(x,\xi)) \,\mathrm{d}\xi \right| \leq C \int_{d(x, \xi)<r} d(x,\xi)^{\alpha} |K(d(x,\xi))| \,\mathrm{d}\xi
\end{equation*}
for some constant $C = C(n, p, \|u\|_{C^{2}(\mathbb{H}^{n})}) > 0$, where $\alpha = 2p-2$ when $p \in (\frac{2}{2-s}, 2)$ and $\alpha = p$ otherwise.
\end{lemma}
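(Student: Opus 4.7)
The plan is to combine an antipodal symmetrization — enabled by the rotational symmetry of $K$ — with Taylor's theorem for $u$ and pointwise bounds on $\Phi_p$. Let $v = \exp_x^{-1}\xi \in T_x\mathbb{H}^n$ and set $\mathcal{T}_x\xi := \exp_x(-v)$, so that $d(x,\xi)=d(x,\mathcal{T}_x\xi)=|v|$ and the change of variables $v\mapsto -v$ preserves the volume element $\sinh^{n-1}|v|\,\mathrm{d}v$. Since $K(d(x,\cdot))$ is invariant under $\mathcal{T}_x$,
\begin{equation*}
\mathrm{P.V.}\int_{d(x,\xi)<r}\Phi_p(u(x)-u(\xi)) K(d(x,\xi))\,\mathrm{d}\xi = \frac{1}{2}\int_{d(x,\xi)<r}\bigl[\Phi_p(A(\xi)) + \Phi_p(A(\mathcal{T}_x\xi))\bigr] K(d(x,\xi))\,\mathrm{d}\xi,
\end{equation*}
where $A(\xi) := u(x)-u(\xi)$; the right-hand integrand is locally integrable, so the principal value is no longer needed. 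A second-order Taylor expansion of $u$ along geodesics from $x$ yields $A(\xi) = -t + b_1$ and $A(\mathcal{T}_x\xi) = t + b_2$ with $t := \langle \nabla u(x), v\rangle$ and $|b_1|,|b_2| \leq C_u |v|^2$. Using that $\Phi_p$ is odd, the symmetrized integrand equals $\Phi_p(-t+b_1) - \Phi_p(-t-b_2)$.

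For $p \geq 2$, the function $\Phi_p \in C^1(\mathbb{R})$ with $|\Phi_p'|=(p-1)|\cdot|^{p-2}$, and the mean-value theorem directly gives $|\Phi_p(-t+b_1)-\Phi_p(-t-b_2)| \leq C(|t|+|v|^2)^{p-2}|b_1+b_2| \leq C|v|^p$, yielding $\alpha=p$. For $p \in (\tfrac{2}{2-s},2)$, the global $(p-1)$-Hölder continuity of $\Phi_p$ produces $|\Phi_p(-t+b_1)-\Phi_p(-t-b_2)| \leq C|b_1+b_2|^{p-1} \leq C|v|^{2p-2}$, giving $\alpha = 2p-2$.

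The delicate case is $p \in (1,\tfrac{2}{2-s}]$, where $\alpha=p$ is required even though $\Phi_p$ is only $(p-1)$-Hölder. This is where the hypothesis $\nabla u(x) \neq 0$ is used, via an angular decomposition of $B_r(x)$ into the good region $\{|t| \geq 2(|b_1|+|b_2|)\}$ and its complement. On the good region the interval joining $-t+b_1$ and $-t-b_2$ stays away from $0$ by at least $|t|/2$, so the mean-value theorem applies and $|\Phi_p(-t+b_1)-\Phi_p(-t-b_2)| \leq C|t|^{p-2}|v|^{2}$; passing to polar normal coordinates,
\begin{equation*}
\int_{\mathrm{good}} |t|^{p-2}|v|^{2} K(|v|)\,\mathrm{d}\xi \leq \Bigl(\int_{\mathbb{S}^{n-1}}|\langle \nabla u(x),\omega\rangle|^{p-2}\,\mathrm{d}\omega\Bigr) \int_{0}^{r} \rho^{p} K(\rho) \sinh^{n-1}\rho\,\mathrm{d}\rho,
\end{equation*}
and the angular factor is finite precisely because $p > 1$ and $\nabla u(x)\neq 0$. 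On the bad region, $|t| \leq C|v|^{2}$ forces $\omega$ into a thin equatorial band of angular measure $\lesssim |v|/|\nabla u(x)|$ perpendicular to $\nabla u(x)$; combining the Hölder bound $|v|^{2p-2}$ with this measure estimate controls the contribution by $\int_{0}^{r} \rho^{2p-1} K(\rho) \sinh^{n-1}\rho\,\mathrm{d}\rho \leq r^{p-1}\int_{0}^{r} \rho^{p} K(\rho)\sinh^{n-1}\rho\,\mathrm{d}\rho$, as desired.

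The principal obstacle is the last case: a naive Hölder estimate falls short of $\alpha=p$, so one must quantify carefully the small-measure set on which $v$ is nearly perpendicular to $\nabla u(x)$, and use finiteness of $\int_{\mathbb{S}^{n-1}}|\langle\nabla u(x),\omega\rangle|^{p-2}\,\mathrm{d}\omega$ for $p > 1$ as the essential mechanism. The remaining steps (symmetrization, Taylor expansion, and the mean-value/Hölder dichotomy for $\Phi_p$) are routine.
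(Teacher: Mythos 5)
The paper itself does not prove this lemma; it defers to \cite[Lemmas A.1--A.3]{dTGCV21} and states that the Euclidean arguments carry over verbatim to $\mathbb{H}^n$. Your proposal correctly reconstructs that argument: antipodal symmetrization to remove the principal value, second-order Taylor expansion along geodesics, and the usual trichotomy for $\Phi_p$ --- the mean-value estimate for $p\ge 2$, the $(p-1)$-H\"older bound for $\frac{2}{2-s}<p<2$, and for $1<p\le\frac{2}{2-s}$ the good/bad angular split, where $\nabla u(x)\ne 0$ enters both through the finiteness of $\int_{\mathbb{S}^{n-1}}|\langle\nabla u(x),\omega\rangle|^{p-2}\,\mathrm{d}\omega$ on the good set and through the $O(\rho)$ angular measure of the bad equatorial band. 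One caveat your computation makes visible, and which is already an imprecision in the lemma as stated: in the case $1<p\le\frac{2}{2-s}$ the constant necessarily depends on $|\nabla u(x)|$ (it blows up as $|\nabla u(x)|\to 0$), not merely on $n$, $p$, and $\|u\|_{C^{2}(\mathbb{H}^n)}$.
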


The cases $p \in [2, \infty)$, $p \in (1, \frac{2}{2-s}]$, and $p \in (\frac{2}{2-s}, 2)$                                                                                                                                                                        are proved in \cite[Lemma A.1, A2, and A3]{dTGCV21}, respectively, for the case of Euclidean spaces. We omit the proof of \Cref{lem:appendix} because the same proofs work in our framework.

\end{appendix}



\begin{thebibliography}{10}

\bibitem{AOCM18}
D.~Alonso-Or\'{a}n, A.~C\'{o}rdoba, and A.~D. Mart\'{\i}nez.
\newblock Integral representation for fractional {L}aplace-{B}eltrami
  operators.
\newblock {\em Adv. Math.}, 328:436--445, 2018.

\bibitem{BGS15}
V.~Banica, M.~d.~M. Gonz\'{a}lez, and M.~S\'{a}ez.
\newblock Some constructions for the fractional {L}aplacian on noncompact
  manifolds.
\newblock {\em Rev. Mat. Iberoam.}, 31(2):681--712, 2015.

\bibitem{Boc49}
S.~Bochner.
\newblock Diffusion equation and stochastic processes.
\newblock {\em Proc. Nat. Acad. Sci. U.S.A.}, 35:368--370, 1949.

\bibitem{BS22}
C.~Bucur and M.~Squassina.
\newblock An asymptotic expansion for the fractional {$p$}-{L}aplacian and for
  gradient-dependent nonlocal operators.
\newblock {\em Commun. Contemp. Math.}, 24(4):Paper No. 2150021, 34, 2022.

\bibitem{CS07}
L.~Caffarelli and L.~Silvestre.
\newblock An extension problem related to the fractional {L}aplacian.
\newblock {\em Comm. Partial Differential Equations}, 32(7-9):1245--1260, 2007.

\bibitem{CS09}
L.~Caffarelli and L.~Silvestre.
\newblock Regularity theory for fully nonlinear integro-differential equations.
\newblock {\em Comm. Pure Appl. Math.}, 62(5):597--638, 2009.

\bibitem{CS17}
L.~A. Caffarelli and Y.~Sire.
\newblock On some pointwise inequalities involving nonlocal operators.
\newblock In {\em Harmonic analysis, partial differential equations and
  applications}, Appl. Numer. Harmon. Anal., pages 1--18.
  Birkh\"{a}user/Springer, Cham, 2017.

\bibitem{CKK+22}
L.~Capogna, J.~Kline, R.~Korte, N.~Shanmugalingam, and M.~Snipes.
\newblock Neumann problems for $p$-harmonic functions, and induced nonlocal
  operators in metric measure spaces.
\newblock {\em Amer. J. Math.}, 147(6):1653--1711, 2025.

\bibitem{CG11}
S.-Y.~A. Chang and M.~d.~M. Gonz\'{a}lez.
\newblock Fractional {L}aplacian in conformal geometry.
\newblock {\em Adv. Math.}, 226(2):1410--1432, 2011.

\bibitem{CK08}
Z.-Q. Chen and T.~Kumagai.
\newblock Heat kernel estimates for jump processes of mixed types on metric
  measure spaces.
\newblock {\em Probab. Theory Related Fields}, 140(1-2):277--317, 2008.

\bibitem{CRS+18}
O.~Ciaurri, L.~Roncal, P.~R. Stinga, J.~L. Torrea, and J.~L. Varona.
\newblock Nonlocal discrete diffusion equations and the fractional discrete
  {L}aplacian, regularity and applications.
\newblock {\em Adv. Math.}, 330:688--738, 2018.

\bibitem{CRT01}
T.~Coulhon, E.~Russ, and V.~Tardivel-Nachef.
\newblock Sobolev algebras on {L}ie groups and {R}iemannian manifolds.
\newblock {\em Amer. J. Math.}, 123(2):283--342, 2001.

\bibitem{DNS19}
P.~L. De~N\'{a}poli and P.~R. Stinga.
\newblock Fractional {L}aplacians on the sphere, the {M}inakshisundaram zeta
  function and semigroups.
\newblock In {\em New developments in the analysis of nonlocal operators},
  volume 723 of {\em Contemp. Math.}, pages 167--189. Amer. Math. Soc.,
  [Providence], RI, [2019] \copyright 2019.

\bibitem{dTGCV21}
F.~del Teso, D.~G\'{o}mez-Castro, and J.~L. V\'{a}zquez.
\newblock Three representations of the fractional {$p$}-{L}aplacian:
  {S}emigroup, extension and {B}alakrishnan formulas.
\newblock {\em Fract. Calc. Appl. Anal.}, 24(4):966--1002, 2021.

\bibitem{DNPV12}
E.~Di~Nezza, G.~Palatucci, and E.~Valdinoci.
\newblock Hitchhiker's guide to the fractional {S}obolev spaces.
\newblock {\em Bull. Sci. Math.}, 136(5):521--573, 2012.

\bibitem{FF15}
F.~Ferrari and B.~Franchi.
\newblock Harnack inequality for fractional sub-{L}aplacians in {C}arnot
  groups.
\newblock {\em Math. Z.}, 279(1-2):435--458, 2015.

\bibitem{FMP+18}
F.~Ferrari, M.~Miranda, Jr., D.~Pallara, A.~Pinamonti, and Y.~Sire.
\newblock Fractional {L}aplacians, perimeters and heat semigroups in {C}arnot
  groups.
\newblock {\em Discrete Contin. Dyn. Syst. Ser. S}, 11(3):477--491, 2018.

\bibitem{FLW14}
R.~L. Frank, D.~Lenz, and D.~Wingert.
\newblock Intrinsic metrics for non-local symmetric {D}irichlet forms and
  applications to spectral theory.
\newblock {\em J. Funct. Anal.}, 266(8):4765--4808, 2014.

\bibitem{Gon18}
M.~d.~M. Gonz\'{a}lez.
\newblock Recent progress on the fractional {L}aplacian in conformal geometry.
\newblock In {\em Recent developments in nonlocal theory}, pages 236--273. De
  Gruyter, Berlin, 2018.

\bibitem{GZ03}
C.~R. Graham and M.~Zworski.
\newblock Scattering matrix in conformal geometry.
\newblock {\em Invent. Math.}, 152(1):89--118, 2003.

\bibitem{Gri03}
A.~Grigor'yan.
\newblock Heat kernels and function theory on metric measure spaces.
\newblock In {\em Heat kernels and analysis on manifolds, graphs, and metric
  spaces ({P}aris, 2002)}, volume 338 of {\em Contemp. Math.}, pages 143--172.
  Amer. Math. Soc., Providence, RI, 2003.

\bibitem{GHL14}
A.~Grigor'yan, J.~Hu, and K.-S. Lau.
\newblock Estimates of heat kernels for non-local regular {D}irichlet forms.
\newblock {\em Trans. Amer. Math. Soc.}, 366(12):6397--6441, 2014.

\bibitem{GN98}
A.~Grigor'yan and M.~Noguchi.
\newblock The heat kernel on hyperbolic space.
\newblock {\em Bull. London Math. Soc.}, 30(6):643--650, 1998.

\bibitem{GZZ18}
L.~Guo, B.~Zhang, and Y.~Zhang.
\newblock Fractional {$p$}-{L}aplacian equations on {R}iemannian manifolds.
\newblock {\em Electron. J. Differential Equations}, pages Paper No. 156, 17,
  2018.

\bibitem{IN10}
H.~Ishii and G.~Nakamura.
\newblock A class of integral equations and approximation of {$p$}-{L}aplace
  equations.
\newblock {\em Calc. Var. Partial Differential Equations}, 37(3-4):485--522,
  2010.

\bibitem{JB96}
C.~M. Joshi and S.~K. Bissu.
\newblock Inequalities for some special functions.
\newblock {\em J. Comput. Appl. Math.}, 69(2):251--259, 1996.

\bibitem{KKL21}
J.~Kim, M.~Kim, and K.-A. Lee.
\newblock Harnack inequality for fractional {L}aplacian-type operators on
  hyperbolic spaces.
\newblock {\em arXiv preprint arXiv:2108.08746}, 2021.

\bibitem{KKL19}
J.~Korvenp\"a\"a, T.~Kuusi, and E.~Lindgren.
\newblock Equivalence of solutions to fractional {$p$}-{L}aplace type
  equations.
\newblock {\em J. Math. Pures Appl. (9)}, 132:1--26, 2019.

\bibitem{Kwa17}
M.~Kwa\'{s}nicki.
\newblock Ten equivalent definitions of the fractional {L}aplace operator.
\newblock {\em Fract. Calc. Appl. Anal.}, 20(1):7--51, 2017.

\bibitem{OLBC10}
F.~W.~J. Olver, D.~W. Lozier, R.~F. Boisvert, and C.~W. Clark, editors.
\newblock {\em N{IST} handbook of mathematical functions}.
\newblock U.S. Department of Commerce, National Institute of Standards and
  Technology, Washington, DC; Cambridge University Press, Cambridge, 2010.
\newblock With 1 CD-ROM (Windows, Macintosh and UNIX).

\bibitem{ST10}
P.~R. Stinga and J.~L. Torrea.
\newblock Extension problem and {H}arnack's inequality for some fractional
  operators.
\newblock {\em Comm. Partial Differential Equations}, 35(11):2092--2122, 2010.

\end{thebibliography}

\end{document}